\documentclass[11pt]{amsart}


\usepackage{amssymb}

\usepackage{amssymb,latexsym,amsmath,amsfonts}
\usepackage{eucal}
\usepackage{color}
\usepackage{ mathrsfs}
\usepackage{dsfont}
\usepackage{multirow}
\voffset = -32pt
\hoffset = -56pt
\textwidth = 16.275cm
\parskip 2.25pt
\textheight = 23cm

\DeclareSymbolFont{SY}{U}{psy}{m}{n}
\DeclareMathSymbol{\emptyset}{\mathord}{SY}{'306}

\theoremstyle{plain}

\newtheorem{thm}{Theorem}[section]

\newtheorem{lem}[thm]{Lemma}
\newtheorem{prop}[thm]{Proposition}
\theoremstyle{definition}
\newtheorem{defn}[thm]{Definition}

\newtheorem{ex}[thm]{Example}

\numberwithin{equation}{section}
\def\A{{\mathbf A}}

\def\V{\mathbf V}

\def\beq{\begin{eqnarray}}
\def\eeq{\end{eqnarray}}
\def\beqa{\begin{eqnarray*}}
\def\eeqa{\end{eqnarray*}}


\begin{document}
\sloppy
\title{Contractivity and complete contractivity for finite dimensional Banach Spaces}

\author{Gadadhar Misra, Avijit Pal and Cherian Varughese}

\address[G. Misra]{Department of mathematics, Indian Institute of Science, Bangalore - 560 012, India}\email[G. Misra]{gm@math.iisc.ernet.in} 
\address[A. Pal]{Department of Mathematics and Statistics, Indian Institute of Science Education And Research Kolkata, Mohanpur - 741 246} \email[A. Pal]{avijitmath@gmail.com}
\address[C. Varughese]{Renaissance Communications, Bangalore - 560 058} \email[C. Varughese]{cherian@rcpl.com}
\thanks{\tiny The work of G.Misra was supported, in part, through the J C Bose National Fellowship and UGC-CAS.
The work of A. Pal was supported, in part, through the UGC-NET and
the IFCAM Research Fellowship. The results of this paper are taken from his PhD thesis, after significant simplifications, submitted to the Indian Institute of Science in  2014.}

\begin{abstract}
Choose an arbitrary but fixed set of $n\times n$ matrices $A_1, \ldots, A_m$ and let $\Omega_\mathbf A\subset \mathbb C^m$ be the unit ball with respect to the norm $\|\cdot\|_{\mathbf A},$ where $\|(z_1,\ldots ,z_m)\|_{\mathbf A}=\|z_1A_1+\cdots+z_mA_m\|_{\rm op}.$ 
It is known that if 
$m\geq 3$ and $\mathbb B$ is any ball in $\mathbb C^m$ with
respect to some norm, say $\|\cdot\|_{\mathbb B},$ then there
exists a contractive linear map $L:(\mathbb
C^m,\|\cdot\|^*_{\mathbb B})\to \mathcal M_k$ which is
not completely contractive. The characterization of those balls in
$\mathbb C^2$ for which contractive linear maps are always
completely contractive thus remains open. We answer this question for
balls of the form $\Omega_\mathbf A$ in $\mathbb C^2.$
\end{abstract}
\maketitle
\section{Introduction}

In 1936 von Neumann (see \cite[Corollary 1.2]{paulsen})
proved that if $T$ is a bounded linear operator on a separable
complex Hilbert space $\mathcal H,$ then, for all complex polynomials $p$,
$$\|p(T)\|\leq \|p\|_{\infty, \mathbb D}:=\sup\{|p(z)|: |z|<1\}$$ if and only if $\|T\|\leq 1.$
Or, equivalently,  the homomorphism $\rho_{T}$
induced by $T$ on the polynomial ring $ P[z]$ by the rule
$\rho_{T}(p)=p(T)$ is contractive if and only if $T$ is contractive.

The original proof of this inequality is intricate. A couple of
decades later, Sz.-Nazy (see \cite[Theorem 4.3]{paulsen}) proved that a bounded linear operator $T$ admits a
unitary (power) dilation if and only if there exists a unitary
operator $U$ on a Hilbert space $\mathcal K \supseteq \mathcal H$
such that
$$P_\mathcal H\,p(U)_{|\mathcal H}=p(T),$$ for all polynomials $p.$
The existence of such a dilation may be established by actually
constructing a unitary operator $U$ dilating $T.$ This
construction is due to Schaffer (cf. \cite{shaffer}). Clearly, the von
Neumann inequality follows from the existence of a power dilation
via the spectral theorem for unitary operators.

Let $P=\left(\!(p_{ij})\!\right)$ be a  $k\times k$ matrix valued polynomial in $m$ variables. Let
$$\|P\|_{\infty,\Omega}=\sup\{\|\left(\!(p_{ij}(z))\!\right)\|_{\rm op}: z \in
\Omega\},$$ where $\Omega \subseteq \mathbb C^m$ is a bounded open and connected set. 
Define $P(T)$ to be the operator
$\left(\!(p_{ij}(T))\!\right),$ $1\leq i,j \leq k.$ The homomorphism $\rho_T$  is said
to be completely contractive if 
$$\|P(T)\| \leq \|P\|_{\infty,\Omega},\,\, k=1,2, \ldots .$$

A deep theorem due to Arveson (cf. \cite{AW}) says that $T$ has a\emph{ normal boundary dilation} if and only if $\rho_T$ is completely contractive.  Clearly, if $\rho_T$ is completely contractive, then it is
contractive. The dilation theorems due to Sz.-Nazy and Ando (cf. \cite{paulsen}) give
the non-trivial converse in the case of the disc and the
bi-disc algebras. 

%

However, Parrott (cf. \cite{parrott}) showed that there are three commuting contractions for which it is impossible to find commuting unitaries dilating them. 
In view of Arveson's theorem this naturally leads to the question of finding other algebras $\mathcal O(\Omega)$ for which all contractive homomorphisms are necessarily completely contractive.
At the moment, this is known to be true of the disc, bi-disc (cf. \cite{paulsen}),  symmetrized bi-disc (cf. \cite{young}) and the annulus algebras (cf. \cite{agler}). Counter examples are known for domains of connectivity $\geq 2$ (cf. \cite{michael}) and any ball in $\mathbb C^m,$ $m\geq 3,$ as we will explain below.

Neither Ando's proof of the existence of a unitary dilation for a
pair of commuting contractions, nor the counter example to such an
existence theorem due to Parrott involved the notion of complete
contractivity directly. In the papers
\cite{G, GM, sastry}, it was shown that the examples of Parrott are not even $2$ -- contractive. In these papers, for any bounded, connected and open set $\Omega \subset \mathbb C^m,$  the homomorphism $\rho_\V: \mathcal O(\Omega) \to \mathcal M_{p+q},$ 
induced by  an  $m$-tuple of $p\times q$ matrices $\V=(V_1, \ldots ,V_m),$ modeled after the examples of Parrott, was introduced. 
This was further studied, in depth, by V. Paulsen \cite{vern}, where he
showed that the question of ``contractive vs completely  contractive''
for Parrott like homomorphisms $\rho_{\V}$ is equivalent to the question of ``contractive vs completely contractive'' for the linear maps $L_{\V}$ from some finite dimensional Banach space $X$ to $\mathcal M_n(\mathbb
C).$ The existence of linear maps of the form $L_{\mathbf V}$ which are contractive but not completely contractive  for $m\geq 5$ were found by him. A refinement  (see remark at the bottom of p. 76 in \cite{pisier})  includes the case  $m=3,4,$ leaving the question of what happens when $m=2$ open. This is Problem 1 on page 79 of \cite{pisier} in the list of ``Open Problems''. 

For the normed linear space $(\mathbb C^2,\|\cdot\|_\mathbf A),$ we show, except when the pair $A_1, A_2$ is simultaneously diagonalizable,  that there is a contractive linear map on $(\mathbb C^2,\|\cdot\|_\mathbf A)$ taking values in $p \times q$ matrices,  which is not completely contractive. 

We point out that the results of Paulsen
used deep ideas from geometry of finite dimensional Banach spaces.
In contrast, our results are elementary in nature, although the
computations, at times, are somewhat involved.
\section{Preliminaries}
The norm  $\|\mathbf z\|_\mathbf A =\| z_1 A_1 + \cdots +z_m A_m\|_{\rm op},\: \mathbf z\in \mathbb C^m,$ is obtained from the embedding of the linear space $\mathbb C^m$ into the $C^*$ algebra of $n\times n$ matrices via the map $P_\mathbf A(z):=  z_1 A_1 + \cdots + z_m A_m.$   
Let $\Omega_\mathbf A\subset \mathbb C^m$
be the unit ball with respect to the norm $\|\cdot\|_{\mathbf A}.$  Let $\mathcal O(\Omega_\mathbf A)$ denote the algebra of functions each of which is holomorphic on some open set containing the closed unit ball $\bar{\Omega}_\A$.
Given $p\times q$ matrices $V_1, \ldots, V_m$ and a function $f
\in \mathcal O(\Omega_\mathbf A)$, define 
\begin{equation}\label{8}
\rho_{\V}(f):=\left (
\begin{smallmatrix}
f(w)I_p~&~ \sum_{i=1}^{m} \partial_if(w)~V_{i} \\
0  ~&~ f(w)I_q
\end{smallmatrix}\right )~~~~{\rm for~a~fixed} ~w\in\Omega_\A.
\end{equation} 
 Clearly, $\rho_{\V}:(\mathcal O(\Omega_\A), \|\cdot\|_\infty) \rightarrow (\mathcal M_{p + q}(\mathbb C),\|\cdot\|_{\rm op})  $ defines an algebra homomorphism.


At the outset we point out the interesting and useful fact that $\rho_\V$ is contractive on $\mathcal O(\Omega_\A)$ if and only if it is contractive on the subset of functions which vanish at $w$.  This is the content of the following lemma. The proof is reproduced from \cite[Lemma 5.1]{vern}, a direct proof appears in \cite[Lemma 3.3]{G}. 

\begin{lem}\label{9}
$\sup_{\|f\|_\infty =1}\{\|\rho_\V(f)\|_{\rm op}: f\in \mathcal O(\Omega_\A) \}\leq 1$ if and only if $\sup_{\|g\|_\infty =1}\{\|\rho_\V(g)\|_{\rm op}: g\in \mathcal O(\Omega_\A) , g(w)=0\} \leq 1.$
\end{lem}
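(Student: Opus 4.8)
The plan is to show that $\rho_{\V}$ is contractive on all of $\mathcal O(\Omega_\A)$ exactly when it shrinks the unit ball of functions vanishing at $w$. One direction is trivial: if $\rho_{\V}$ is contractive on the whole algebra, it is in particular contractive on the subset $\{g \in \mathcal O(\Omega_\A): g(w)=0\}$. So the content is the reverse implication, and for this I would exploit the structure of $\rho_\V$ together with the automorphism group of the ball $\Omega_\A$.

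First I would observe that, from the definition \eqref{8}, the map $\rho_{\V}(f)$ depends only on the two-jet data $(f(w),\partial_1 f(w),\ldots,\partial_m f(w))$, and that $\|\rho_{\V}(f)\|_{\rm op}$ is unchanged if we replace $f$ by $f - f(w)$ up to the contribution of the scalar part $f(w)I_{p+q}$; more precisely $\rho_{\V}(f) = f(w)I_{p+q} + \rho_{\V}(f - f(w))$, and the second summand is strictly upper triangular (nilpotent). Given an arbitrary $f$ with $\|f\|_\infty = 1$, the idea is to produce from it a function $g$ vanishing at $w$ with $\|g\|_\infty \le 1$ and $\|\rho_{\V}(g)\|_{\rm op} \ge \|\rho_{\V}(f)\|_{\rm op}$. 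The natural candidate is to compose $f$ with a biholomorphic automorphism $\psi$ of $\Omega_\A$ that moves $w$ to a point where we can control things, or better: set $g = \psi_{f(w)} \circ f$, where $\psi_\lambda$ is the Möbius transformation of the unit disc sending $f(w)$ to $0$. Since $f$ maps $\Omega_\A$ into $\overline{\mathbb D}$ and $\psi_{f(w)}$ maps $\overline{\mathbb D}$ into itself, $g$ is holomorphic near $\bar\Omega_\A$ with $\|g\|_\infty \le 1$ and $g(w)=0$. The chain rule gives $\partial_i g(w) = \psi_{f(w)}'(f(w))\,\partial_i f(w) = \frac{1}{1-|f(w)|^2}\,\partial_i f(w)$.

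The key computation is then to compare $\|\rho_{\V}(f)\|_{\rm op}$ with $\|\rho_{\V}(g)\|_{\rm op}$. Writing $a = f(w)$, $N = \sum_i \partial_i f(w) V_i$, one has $\rho_{\V}(f) = \left(\begin{smallmatrix} a I_p & N \\ 0 & a I_q\end{smallmatrix}\right)$ and $\rho_{\V}(g) = \left(\begin{smallmatrix} 0 & (1-|a|^2)^{-1} N \\ 0 & 0\end{smallmatrix}\right)$, so $\|\rho_{\V}(g)\|_{\rm op} = (1-|a|^2)^{-1}\|N\|_{\rm op}$. A direct $2\times 2$ block norm estimate — diagonalizing $N$ by singular value decomposition reduces everything to $2\times 2$ scalar blocks $\left(\begin{smallmatrix} a & s \\ 0 & a\end{smallmatrix}\right)$ with $s = \|N\|_{\rm op}$ — shows that $\left\|\left(\begin{smallmatrix} a & s \\ 0 & a\end{smallmatrix}\right)\right\| \le 1$ if and only if $s \le 1 - |a|^2$, i.e. $(1-|a|^2)^{-1} s \le 1$, so $\|\rho_{\V}(f)\|_{\rm op} \le 1 \iff \|\rho_{\V}(g)\|_{\rm op} \le 1$. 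Taking the supremum over all admissible $f$ (using that $g$ ranges over functions vanishing at $w$ as $f$ ranges over the unit ball, surjectively onto the relevant jet data) yields the equivalence of the two suprema.

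I expect the main obstacle to be the $2\times 2$ operator-norm inequality $\left\|\left(\begin{smallmatrix} a & s \\ 0 & a\end{smallmatrix}\right)\right\|_{\rm op} \le 1 \Leftrightarrow s \le 1-|a|^2$ and the reduction of the block-matrix norm to it: one must justify that the singular value decomposition of $N$ block-diagonalizes $\rho_{\V}(f)$ into $2\times 2$ scalar pieces (plus trivial $1\times 1$ pieces contributing only $|a|$), and that this does not interact badly with the $I_p$ versus $I_q$ sizes when $p \ne q$. Everything else is a routine application of the Schwarz–Pick lemma for the disc composed with a holomorphic $\Omega_\A \to \overline{\mathbb D}$.
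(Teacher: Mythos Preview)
Your argument is correct, but it takes a different route from the paper's proof. Both begin the same way, composing $f$ with the M\"obius automorphism $\psi_{f(w)}$ of the disc to produce $g=\psi_{f(w)}\circ f$ with $g(w)=0$ and $\|g\|_\infty\le 1$. From there the paper finishes in one line by using that $\rho_\V$ is a \emph{homomorphism}: writing $f=\psi_{f(w)}^{-1}\circ g$, one has $\rho_\V(f)=\psi_{f(w)}^{-1}\big(\rho_\V(g)\big)$, and since $\|\rho_\V(g)\|\le 1$ and $\psi_{f(w)}^{-1}$ maps $\mathbb D$ to $\mathbb D$, von Neumann's inequality yields $\|\rho_\V(f)\|\le 1$ immediately. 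You instead compute $\rho_\V(g)$ explicitly via the chain rule and then reduce the block-matrix norm $\big\|\big(\begin{smallmatrix} aI_p & N\\ 0 & aI_q\end{smallmatrix}\big)\big\|$ to the scalar case by SVD, establishing the criterion $\|N\|\le 1-|a|^2$ by hand. This works (the $p\ne q$ case is fine: after SVD and a permutation the matrix decomposes into $\min(p,q)$ two-by-two blocks and $|p-q|$ one-by-one blocks $(a)$), and in fact your norm identity is exactly Lemma~\ref{lem:con} of the paper, proved there via the Douglas--Muhly--Pearcy criterion. So your approach is more elementary in that it avoids invoking von Neumann's inequality and the functional calculus, at the cost of a longer explicit computation; the paper's approach is shorter and more conceptual but leans on the homomorphism structure. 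One small point: you should dispose separately of the edge case $|f(w)|=1$, where the M\"obius map degenerates; by the maximum principle $f$ is then constant and $\|\rho_\V(f)\|=1$ trivially.
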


\begin{proof}
The implication in one direction is obvious.  
To prove the converse, assume that $\|\rho_\V(g)\|\leq 1$ for every $g$ such that $g(w)=0$ and $\|g\|_\infty=1$.

For $f\in \mathcal O (\Omega_\A)$ with $\|f\|_\infty = 1$ let $\phi_{f(w)}$ be the M\"{o}bius map of the disc which maps $f(w)$ to $0$.  We let $g=\phi_{f(w)}\circ f$.  Then $g(w)=0, \|g\|_\infty=1$ and, from our assumption, $\|\rho_\V(g)\|\leq 1$. So
\begin{align*}
\|\rho_\V(f)\| &= \|\rho_\V(\phi_{f(w)}^{-1}\circ g \|\\
               &= \|\phi_{f(w)}^{-1}\big(\rho_\V(g)\big) \|~~ {\rm since}~\rho_\V ~ {\rm is~ a ~homomorphism}   \\
               &\leq 1.           
\end{align*}
In the last step we use the von Neumann inequality since $\phi_{f(w)}^{-1}$ is a rational function from the disc to itself.
\end{proof}


\noindent{\bf Note:} For the rest of this work, we restrict to the case where $w=0$ in the definition (\ref{8}) of $\rho_\V$ above.

The following lemma provides a characterization of the unit ball $\Omega_\A^*$ with respect to the dual norm $\|\cdot \|_{\A}^*$ in $\mathbb C^m,$ that is $\Omega_{\A}^* = (\mathbb C^m,\|\cdot\|_{\A}^*)_1$.

\begin{lem}\label{10}
The dual unit ball $$\Omega_\A^* = \big\{\big(\partial_1 f(0), \partial_2 f(0), \cdots ,\partial_m f(0)\big): f\in{\rm Hol}(\Omega_\A,\mathbb D),f(0)=0\big\}.$$
\end{lem}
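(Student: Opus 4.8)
The plan is to establish the two inclusions of the asserted equality separately; the only tool needed is the classical one-variable Schwarz lemma. Throughout I would write $\langle \mathbf z,\mathbf w\rangle=\sum_{i=1}^m z_iw_i$, so that $\|\mathbf w\|_\A^*=\sup\{|\langle \mathbf z,\mathbf w\rangle|:\mathbf z\in\bar\Omega_\A\}$, and I would record the elementary fact that, for holomorphic $f$, the derivative at $\lambda=0$ of the function $\lambda\mapsto f(\lambda\mathbf z)$ equals $\langle \mathbf z,\partial f(0)\rangle$, where $\partial f(0)=(\partial_1f(0),\dots,\partial_mf(0))$.

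For the inclusion ``$\supseteq$'' I would observe that every point of the dual ball is already realised as the gradient of a \emph{linear} map into $\mathbb D$. Given $\mathbf w$ with $\|\mathbf w\|_\A^*\le1$, set $f_\mathbf w(\mathbf z)=\langle \mathbf z,\mathbf w\rangle$: it is holomorphic on $\C^m$, vanishes at $0$, has $\partial_if_\mathbf w(0)=w_i$, and satisfies $|f_\mathbf w(\mathbf z)|\le\|\mathbf z\|_\A\,\|\mathbf w\|_\A^*<1$ for every $\mathbf z\in\Omega_\A$; hence $f_\mathbf w\in{\rm Hol}(\Omega_\A,\mathbb D)$, and $\mathbf w$ lies in the set on the right.

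For the inclusion ``$\subseteq$'' I would restrict an arbitrary competitor to the complex lines through the origin. Let $f\in{\rm Hol}(\Omega_\A,\mathbb D)$ with $f(0)=0$, fix $\mathbf z\ne0$ with $\|\mathbf z\|_\A\le1$, and put $R=1/\|\mathbf z\|_\A\ge1$. Since $\|\lambda\mathbf z\|_\A<1$ whenever $|\lambda|<R$, the slice $g(\lambda)=f(\lambda\mathbf z)$ is holomorphic on the disc $\{|\lambda|<R\}$, takes values in $\mathbb D$, and satisfies $g(0)=0$; applying the Schwarz lemma to $\mu\mapsto g(R\mu)$ gives $R\,|g'(0)|\le1$, i.e. $|\langle \mathbf z,\partial f(0)\rangle|=|g'(0)|\le\|\mathbf z\|_\A\le1$. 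Taking the supremum over $\mathbf z\in\bar\Omega_\A$ yields $\|\partial f(0)\|_\A^*\le1$, so $\partial f(0)$ lies in the dual ball, which finishes the argument.

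Since there is essentially no obstacle, the only thing worth isolating is the idea of testing a holomorphic function on $\Omega_\A$ against the one-parameter family of discs $\lambda\mapsto\lambda\mathbf z$, which collapses the statement to the scalar Schwarz lemma, together with the remark that linear functionals already exhaust the dual ball so no genuinely nonlinear competitor can do better. One mild point deserves care: the argument shows that the set on the right is exactly $\{\mathbf w:\|\mathbf w\|_\A^*\le1\}$ --- a norming functional of norm one being realised by $f_\mathbf w$ --- so $(\C^m,\|\cdot\|_\A^*)_1$ is to be read here as the \emph{closed} dual unit ball.
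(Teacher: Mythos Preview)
Your proof is correct and follows essentially the same route as the paper: one inclusion via the linear functionals $f_{\mathbf w}(\mathbf z)=\langle\mathbf z,\mathbf w\rangle$, the other by slicing along complex lines $\lambda\mapsto\lambda\mathbf z$ and invoking the one-variable Schwarz lemma. Your labels ``$\supseteq$'' and ``$\subseteq$'' are reversed relative to the displayed equality $\Omega_\A^*=\{\cdots\}$, and your closing remark that the right-hand side is the \emph{closed} dual ball is a useful clarification the paper leaves implicit.
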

\begin{proof}
Given ${\mathbf z}\in \mathbb C^m$ such that $\|\mathbf z\|_\A = 1$ and $f\in{\rm Hol}(\Omega_\A,\mathbb D),f(0)=0$, we define $g_{\mathbf z}:\mathbb D \rightarrow \Omega_\A$ by
$$g_{\mathbf z}(\lambda) = \lambda {\mathbf z}, ~\lambda \in \mathbb D.$$
Then $f\circ g_{\mathbf z}: \mathbb D \rightarrow \mathbb D$ with $(f\circ g_{\mathbf z})(0)=0$.  
Applying the Schwarz Lemma to the function $(f\circ g_{\mathbf z})$ we get
$$1\geq |(f \circ g_\mathbf z)^{\prime}(0)|=|f^{\prime}(g_\mathbf z(0))\cdot g^{\prime}_\mathbf z(0)|=
|f^{\prime}(0)\cdot g_{\mathbf z}^{\prime}(0)|=|f^{\prime}(0)\cdot \mathbf z|.$$
In the above, $f^{\prime}(0)\cdot \mathbf z = \sum_{i=1}^{m} (\partial_if(0))z_{i},$ etc.

Hence $\big(\partial_1 f(0), \partial_2 f(0), \cdots ,\partial_m f(0)\big)\in \Omega_\A^*$.

Conversely, given $\mathbf w \in \Omega_\A^*$, we define $f_{\mathbf w}(\mathbf z) = \mathbf w \cdot \mathbf z$ so that $\partial_i f_{\mathbf w}(0) = w_i$.
\end{proof}

\subsection{The Maps $L_{\V}^{(k)}$:}

From Lemma \ref{9} above it follows that 
\begin{equation}\label{11}
\|\rho_\V\| \leq 1  ~{\rm if ~and ~only~ if}~  \sup_{\|f\|_\infty=1,f(0)=0}~~\|\sum_{i=1}^{m} \partial_if(0)~V_{i}\|_{\rm op}\leq 1.
\end{equation}
Considering Lemma \ref{10} and the equivalence (\ref{11}) above it is natural to consider the induced linear map $L_{\V}:(\mathbb
C^m,\|\cdot\|^*_{\mathbf A})\to \mathcal M_{p, q}(\mathbb
C)$ given by
$$L_{\V}(w)=w_1V_1+\cdots+w_mV_m.$$ 
It follows from (\ref{10}) above that 
$$\|\rho_\V\|\leq 1 ~{\rm if~ and~ only~ if}~ \|L_\V\| \leq 1.$$

We will show now that the complete contractivity of $\rho_\V$ and $L_\V$ are also related similarly.

For a holomorphic function $F:\Omega_\A \rightarrow \mathcal M_k$ with $\|F\|=\sup_{\bf z\in\Omega_{\A}}\|F(\bf z)\|,$ we define
\begin{equation}\label{13}
\rho_{\V}^{(k)}(F):=(\rho_{\V}(F_{ij}))_{i,j=1}^m=\left ( \begin{smallmatrix}
F(0)\otimes I ~&~ \sum_{i=1}^m (\partial_i F(0))\otimes V_i \\
0  ~&~ F(0)\otimes I
\end{smallmatrix}\right ).
\end{equation}
Using a method similar to that used for $\rho_\V$ it can be shown that 
$$\|\rho_\V^{(k)}\| \leq 1 ~{\rm if~and~only~if}~ \sup_F\{\|\sum_{i=1}^m (\partial_i F(0))\otimes V_i\|: F\in{\rm Hol} (\Omega_{\A},(\mathcal M_k)_1), F(0) = 0\}\leq 1,$$
that is, (by repeating the argument used for $\rho_\V$) we have
$$
\|\rho_\V^{(k)}\| \leq 1 ~{\rm if~and~only~if}~ \|L_\V^{(k)}\| \leq 1,
$$
where $$L_\V^{(k)}: (\mathbb C^m \otimes \mathcal M_k,\|\cdot \|_{\A,k}^*) \rightarrow (\mathcal M_k \otimes \mathcal M_{p,q},\|\cdot \|_{\rm op})$$
is the map
$$
L_{\V}^{(k)}(\Theta_1,\Theta_2,\cdots, \Theta_m)=\Theta_1\otimes V_1 + \Theta_2\otimes V_2 + \cdots +\Theta_m\otimes V_m ~~~{\rm for}~ \Theta_1,\Theta_2,\cdots, \Theta_m \in \mathcal M_k.
$$
\subsection{The polynomial $P_\A$}
A very useful construct for our analysis is the matrix valued polynomial $P_\A$ with $P_\A(\Omega_\A) \subseteq (\mathcal M_n, \|\cdot\|_{\rm op})_1$ defined by 
$$ P_\A (z_1,z_2, \cdots, z_m) = z_1 A_1 + z_2A_2+\cdots+ z_mA_m,$$
with the norm 
$\|P_\A\|_{\infty}= \sup_{(z_1,\cdots,z_m)\in \Omega_\A}\|P_\A(z_1,\cdots, z_m)\|_{\rm op}$.  Note that $\|P_\A\|_{\infty} =1$ by definition.  The typical procedure used to show the existence of a homomorphism which is contractive but not completely contractive is to construct a contractive homomorphism $\rho_\V$ (by a suitable choice of $\V$) and to then show that its evaluation on $P_\A$, that is, $\rho_\V^{(n)}(P_\A)$, has norm greater than 1.

\subsection{Homomorphisms induced by $m$-vectors}
We now consider the special situation when the matrices $V_1,\cdots, V_m$ are vectors in $\mathbb C^m$ realized as row $m$-vectors.  For $w= (w_1, \ldots, w_m)$ in some bounded domain $\Omega\subseteq \mathbb C^m,$ the commuting $m$-tuple of $(m+1) \times (m+1)$  matrices of the form $\Big ( \begin{smallmatrix} w	_i & V_i \\ 0 & w_i I_m \end{smallmatrix} \Big ),$ $1 \leq i \leq m,$ induce the homomorphism $\rho_{\boldsymbol V}$ via the usual functional calculus, that is, 
$$\rho_{\boldsymbol V}(f) := f\big (\Big ( \begin{smallmatrix} w	_1 & V_1 \\ 0 & w_1 I_m \end{smallmatrix}\Big ),\ldots , \Big ( \begin{smallmatrix} w	_m & V_m \\ 0 & w_m I_m \end{smallmatrix} \Big ) \big ), f \in \mathcal O(\Omega),$$ 
see \eqref{8}.  The  localization of a commuting $m$ - tuple $\boldsymbol T$ of  operators  in the class $B_1(\Omega),$ introduced in  (\cite{cowen, douglas}), is also a commuting $m$ - tuple of $(m+1)\times (m+1)$ matrices, which is exactly of the form described above. The  vectors $V_1, \ldots , V_m$ appearing in such localizations  are  given explicitly in terms of the curvature of the holomorphic Hermitian vector bundle  corresponding to $\boldsymbol T$ as shown in \cite{douglas}.  The contractivity of the homomorphism $\rho_{\boldsymbol V}$ then results in curvature inequalities (see \cite{misra,GM,sastry,avijit}).    

Let $V_i = \left(\begin{matrix} v_{i1} & v_{i2} &\cdots &v_{im} \end{matrix}\right),~~i=1,\cdots, m$. The propositions below are useful to study contractivity and complete contractivity in this special case, where, as before, we assume that $\Omega=\Omega_{\boldsymbol A}$  and $w=0.$ 

\begin{prop}\label{14}
The following are equivalent:

\begin{enumerate}

 \item[(i)] $\rho_{\V}$ is
contractive,

\item [(ii)] $\sup_{\sum_{j=1}^{m}|z_j|^2 \leq 1}
\|\sum_{j=1}^{m}z_jB_{j} \|_{\rm op}^2 \leq 1,$ where
$B_{j}=\sum_{i=1}^{m}v_{ij}A_{i}.$
\end{enumerate}
\end{prop}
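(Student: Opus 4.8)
The plan is to translate the contractivity of $\rho_{\V}$ through the two reductions already established in the excerpt and then recognize the resulting quantity as the square of the norm of an explicit linear map on the dual space. By equation~\eqref{11} together with Lemma~\ref{10}, $\rho_{\V}$ is contractive precisely when $\sup\{\|\sum_{i=1}^m w_i V_i\|_{\rm op} : \mathbf w \in \Omega_\A^*\}\leq 1$, i.e. when the linear map $L_\V:(\mathbb C^m,\|\cdot\|_\A^*)\to \mathcal M_{1,m}$ (here $p=1$, $q=m$, so the $V_i$ are row vectors) is a contraction. So the first step is to write out $L_\V(\mathbf w)=\sum_{i=1}^m w_i V_i$, a single row vector whose $j$-th entry is $\sum_{i=1}^m w_i v_{ij}$, and to compute its operator norm as a $1\times m$ matrix, namely $\big(\sum_{j=1}^m |\sum_{i=1}^m w_i v_{ij}|^2\big)^{1/2}$.

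The second step is to dualize. Since the operator norm of a row vector $\mathbf u$ equals $\|\mathbf u\|_{\ell^2}=\sup\{|\langle \mathbf u,\boldsymbol\zeta\rangle| : \|\boldsymbol\zeta\|_{\ell^2}\leq 1\}$, the condition $\|L_\V\|\leq 1$ becomes $|\sum_{j=1}^m (\sum_{i=1}^m w_i v_{ij})\overline{\zeta_j}|\leq 1$ for all $\mathbf w\in\Omega_\A^*$ and all $\boldsymbol\zeta$ in the closed Euclidean ball. Now one must pass from "$\mathbf w$ ranges over $\Omega_\A^*$" to "$\mathbf z$ ranges over $\Omega_\A$": by the bipolar theorem $\Omega_\A^{**}=\Omega_\A$, so the supremum over $\mathbf w\in\Omega_\A^*$ of a linear functional $\mathbf w\mapsto \langle \mathbf w,\mathbf c\rangle$ equals $\|\mathbf c\|_\A$. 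Swapping the order of the two suprema, condition~(i) is equivalent to
\[
\sup_{\|\boldsymbol\zeta\|_{\ell^2}\leq 1}\ \Big\|\Big(\textstyle\sum_{j=1}^m \overline{\zeta_j} v_{1j},\ \dots,\ \sum_{j=1}^m \overline{\zeta_j} v_{mj}\Big)\Big\|_\A \leq 1,
\]
where the $i$-th coordinate of the vector inside is $\sum_{j=1}^m \overline{\zeta_j} v_{ij}$. Unwinding the definition $\|\cdot\|_\A$, this is $\sup_{\|\boldsymbol\zeta\|_{\ell^2}\leq 1}\|\sum_{i=1}^m (\sum_{j=1}^m \overline{\zeta_j} v_{ij}) A_i\|_{\rm op}\leq 1$. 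Setting $z_j=\overline{\zeta_j}$ (which also ranges over the Euclidean ball) and exchanging the two summations, the inner operator equals $\sum_{j=1}^m z_j \big(\sum_{i=1}^m v_{ij} A_i\big) = \sum_{j=1}^m z_j B_j$ with $B_j=\sum_{i=1}^m v_{ij}A_i$. Squaring both sides of the inequality gives exactly statement~(ii).

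I expect the main point requiring care is the duality juggling in the second step: correctly identifying the dual of $(\mathbb C^m,\|\cdot\|_\A)$ with $(\mathbb C^m,\|\cdot\|_\A^*)$, justifying $\Omega_\A^{**}=\Omega_\A$ (finite dimensions, so the bipolar theorem applies without closure subtleties, though one should note that $\|\cdot\|_\A$ is genuinely a norm, which requires the $A_i$ to be such that $P_\A(\mathbf z)=0$ forces $\mathbf z=0$, otherwise one works with a quotient), and keeping the conjugations consistent so that the two suprema can be interchanged by a routine compactness argument. The rest is bookkeeping: the reduction to functions vanishing at $0$ is Lemma~\ref{9}, the identification of $\Omega_\A^*$ is Lemma~\ref{10}, and the operator-norm computation for row vectors is elementary.
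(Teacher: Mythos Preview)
Your proposal is correct and follows essentially the same route as the paper. The only difference is packaging: the paper compresses your entire second step into the single observation that $\|L_\V\|_{(\mathbb C^m,\|\cdot\|_\A^*)\to(\mathbb C^m,\|\cdot\|_2)}=\|L_\V^*\|_{(\mathbb C^m,\|\cdot\|_2)\to(\mathbb C^m,\|\cdot\|_\A)}$, writes down the matrix of $L_\V^*$, and reads off condition~(ii) directly from the definition of $\|\cdot\|_\A$; your explicit bipolar/supremum-swap argument is exactly what underlies that adjoint identity.
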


\begin{proof}
We have shown that the homomorphisms  $\|\,\rho_{\V}\,\|_{\mathcal O(\Omega_{\mathbf A})\rightarrow\mathcal M_{m+1}(\mathbb C)}$  is contractive if and only if the linear map $\|L_{\V}\|_{(\mathbb C^m,
\|\,\cdot\,\|_{\A}^*) \rightarrow (\mathbb C^m,
\|\,\cdot\,\|_2) }$ is contractive (equivalently if $\|L_{\V}^*\|_{(\mathbb C^m,\|\,\cdot\,\|_2)  \rightarrow (\mathbb C^m,\|\,\cdot\,\|_\A) }$ is contractive).

The matrix representation of
$L_{\V}^*$ is  $\left (
\begin{smallmatrix}
v_{11} & \ldots & v_{1m}  \\
\vdots & \ddots &\vdots\\
 v_{m1} &\ldots   & v_{mm}
\end{smallmatrix} \right ).$ 

Hence the contractivity of $L_{\V}^*$ is given by the condition that
$$\sup_{\sum_{j=1}^m |z_j|^2 \leq 1}\Bigg\|\left (
\begin{smallmatrix}
v_{11} & \ldots & v_{1m}  \\
\vdots & \ddots &\vdots\\
 v_{m1} &\ldots   & v_{mm}
\end{smallmatrix} \right )\left ( \begin{smallmatrix}
z_1 \\
 \vdots\\
 z_m
\end{smallmatrix} \right )\Bigg\|_\A \leq 1 .$$

From the definition of $\|\cdot\|_\A$ it follows that 

$$\|L_{\V}^*\|_{(\mathbb C^m, \|\,\cdot\,\|_2)\rightarrow
(\mathbb C^m, \|\,\cdot\,\|_{\A})} \leq 1 ~~{\rm if~and~only~if}~~ \sup_{\sum_{j=1}^{m}|z_j|^2 \leq 1}
\|\sum_{j=1}^{m}z_j B_{j} \|_{\rm op}^2 \leq 1$$ where
$B_{j}=\sum_{i=1}^{m}v_{ij}A_{i}.$ 
\end{proof}
In particular, if $V_1 = \left(\begin{matrix} u & 0 \end{matrix}\right)$ and $V_2 = \left(\begin{matrix} 0 & v \end{matrix}\right),$ the condition (ii) above becomes $$\sup_{\sum_{j=1}^{2}|z_j|^2 \leq 1}
\|z_1 u A_1 + z_2 v A_2 \|^2 \leq 1,$$ which is equivalent to the following two conditions:

\begin{enumerate}
\item[(i)] $|u|^2\leq \frac{1}{\|A_1^{*}\|^2}$ or $|v|^2\leq \frac{1}{\|A_2^{*}\|^2}$
\item[(ii)] 
\begin{equation*}
\inf_{\beta\in\mathbb C^2,\|\beta\|=1}\Big\{1-|u|^2\|A_{1}^*\beta\|^2
-|v|^2\|A_{2}^*\beta\|^2+
|uv|^2\Big(\|A_{1}^*\beta\|^2\|A_{2}^*\beta\|^2 - |\left\langle
A_1A_{2}^*\beta, \beta\right\rangle |^2\Big)\Big\}\geq 0.
\end{equation*}

\end{enumerate}

\begin{prop}\label{15}
The following are equivalent:
\begin{enumerate}
\item[(i)]$\|\rho_{\V}^{(n)}(P_{\mathbf A})\|\leq 1,$

\item[(ii)] $The~~ n\times mn ~~matrix \left( \begin{matrix} B_1 & B_2 & \cdots &B_m \end{matrix}\right) $ is contractive,  where $B_j=\sum_{i=1}^{m}v_{ij}A_i$.

\end{enumerate}
\end{prop}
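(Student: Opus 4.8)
The plan is to unwind the definition \eqref{13} of $\rho_\V^{(n)}$ and evaluate it at the linear map $P_\A$. Since $P_\A(z)=z_1A_1+\cdots+z_mA_m$ is homogeneous of degree one we have $P_\A(0)=0$ and $\partial_iP_\A(0)=A_i$, so substituting $F=P_\A$ into \eqref{13} (with $k=n$, and, in the present situation of row $m$-vectors, $p=1$, $q=m$) makes both diagonal blocks vanish and leaves
\[
\rho_\V^{(n)}(P_\A)=\begin{pmatrix} 0 & \sum_{i=1}^m A_i\otimes V_i \\ 0 & 0\end{pmatrix}.
\]
The first observation I would record is the elementary norm identity $\big\|\big(\begin{smallmatrix}0 & X\\ 0 & 0\end{smallmatrix}\big)\big\|_{\rm op}=\|X\|_{\rm op}$, valid for any rectangular block $X$: the inequality ``$\le$'' holds because the block operator is $X$ pre- and post-composed with a coordinate projection and inclusion, while ``$\ge$'' follows by testing on vectors supported in the relevant coordinate block. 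Hence statement (i) is equivalent to $\big\|\sum_{i=1}^m A_i\otimes V_i\big\|_{\rm op}\le 1$.

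It then remains to identify the operator $\sum_{i=1}^m A_i\otimes V_i$ with the $n\times mn$ matrix $(B_1\ B_2\ \cdots\ B_m)$. Viewing $A_i\otimes V_i$ as a map from $\mathbb C^n\otimes\mathbb C^m$ to $\mathbb C^n\otimes\mathbb C\cong\mathbb C^n$, on an elementary tensor $e\otimes f$ one gets $(A_i\otimes V_i)(e\otimes f)=(V_if)\,A_ie$; summing over $i$ and expanding $V_if=\sum_j v_{ij}f_j$ gives $\sum_j f_j\big(\sum_i v_{ij}A_i\big)e=\sum_j f_j B_j e$. Under the natural identification of $\mathbb C^n\otimes\mathbb C^m$ with $(\mathbb C^n)^{\oplus m}=\mathbb C^{nm}$ this is precisely the action of the block row matrix $(B_1\ \cdots\ B_m)=\big(\sum_i v_{i1}A_i\ \cdots\ \sum_i v_{im}A_i\big)$; since the identification is unitary, the two operators have equal operator norm. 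Therefore $\|\rho_\V^{(n)}(P_\A)\|_{\rm op}=\|(B_1\ \cdots\ B_m)\|_{\rm op}$, and (i) $\Leftrightarrow$ (ii) follows.

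I do not expect a genuine obstacle here; the argument is essentially bookkeeping. The only points that need a little care are the norm formula for strictly upper triangular $2\times2$ operator matrices recorded above, and keeping the order of the tensor factors straight --- concretely, one should check that $\sum_i A_i\otimes V_i$ coincides with $(B_1\ \cdots\ B_m)$ only after a harmless unitary permutation of the coordinates of $\mathbb C^{nm}$, which does not affect the operator norm. One could instead bypass tensor notation altogether by writing $\rho_\V^{(n)}(P_\A)=\big(\rho_\V((P_\A)_{ij})\big)_{i,j=1}^n$ entrywise and reorganising rows and columns, but the tensor computation is the cleaner route.
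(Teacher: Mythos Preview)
Your proof is correct and follows essentially the same approach as the paper: both reduce (i) to the inequality $\|\sum_i A_i\otimes V_i\|\le 1$ using $P_\A(0)=0$, and then identify $\sum_i A_i\otimes V_i$ with the block row $(B_1\ \cdots\ B_m)$. Your version is simply more explicit about the norm identity for strictly upper-triangular block matrices and the unitary identification $\mathbb C^n\otimes\mathbb C^m\cong(\mathbb C^n)^{\oplus m}$, points the paper leaves implicit.
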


\begin{proof}
Since
$P_{\mathbf A}(0)=0,$ it follows from the definition (\ref{13})
that $\|\rho_{\V}^{(n)}(P_{\mathbf A})\|\leq 1$ if and only if
$$\|A_1\otimes V_1+\ldots+A_m\otimes V_m\|\leq 1.$$
For $V_i=\left(\begin{matrix}v_{i1}&\cdots& v_{im}\end{matrix}\right),$ we have
$$A_1\otimes V_1+\ldots+A_m\otimes V_m = \left( \begin{matrix} B_1 & B_2 & \cdots &B_m \end{matrix}\right)$$

Thus $\|\rho_{V}^{(n)}(P_{\mathbf A})\|\leq 1$ if and only if
$\| \left( \begin{matrix} B_1 & B_2 & \cdots &B_m \end{matrix}\right)
 \|\leq 1.$

\end{proof}

In particular if $V_1 = \left(\begin{matrix} u & 0 \end{matrix}\right)$ and $V_2 = \left(\begin{matrix} 0 & v \end{matrix}\right)$ the condition (ii) above becomes
$$\inf_{\beta\in\mathbb C^2,\|\beta\|=1}\Big\{1-|u|^2\|A_{1}^*\beta\|^2
-|v|^2\|A_{2}^*\beta\|^2\Big\}\geq 0.$$


\noindent{\bf Note:} For most of this paper we will restrict to the two dimensional case. That is, we consider $\mathbb C^2$ with the norm defined by a matrix pair $(A_1,A_2)$. In fact, for the most part, we even restrict to the situation where $A_1, A_2$ are $2\times 2$ matrices. This is adequate for our primary purpose of constructing homomorphisms of $\mathcal O(\Omega_\A)$ which are contractive but not completely contractive.  Many of the results can be adapted to higher dimensional situation. 

\section{Defining Function and Test Functions}

Recall the matrix valued polynomial $P_{\mathbf A}:\Omega_{\mathbf A}\rightarrow (\mathcal M_{2}, \|\cdot\|_{\rm op})_{1}$  defined earlier by $$P_{\mathbf A}(z_{1}, z_{2})=z_{1}A_{1}+z_{2}A_{2},$$ where $(\mathcal M_{2}, \|\cdot\|_{\rm op})_{1}$ is the matrix unit ball with respect to the operator norm. For $(z_{1}, z_{2})$ in
$\Omega_{\mathbf A},$ the norm $$\|P_{\mathbf A}\|_\infty :=
\sup_{(z_1,z_2)\in \Omega_\mathbf A} \|P_\mathbf A(z_1,z_2)\|_{\rm op} = 1$$ by definition of the
polynomial $P_\mathbf A.$ 

Let $\mathbb B^2$ be the unit ball in $\mathbb C^2$. For $(\alpha, \beta) \in \mathbb B^2 \times \mathbb B^2,$  define
$p^{(\alpha,\beta)}_{\mathbf A}:\Omega_{\mathbf A}\rightarrow
\mathbb D$ to be the linear map $$p^{(\alpha,\beta)}_{\mathbf
A}(z_1,z_2)=\left\langle P_{\mathbf A}(z_1, z_2)\alpha, \beta\right\rangle
=z_1\langle A_1\alpha,\beta \rangle + z_2\langle A_2 \alpha,\beta \rangle.$$
The sup norm $\|p^{(\alpha,\beta)}_{\mathbf A}\|_{\infty}$, for any pair of vectors $(\alpha, \beta)$ in
$\mathbb B^2\times \mathbb B^2,$ is at most $1$ by definition. 
Let $\mathcal{P}_{\mathbf A}$ denote the collection of linear functions
$\{p^{(\alpha,\beta)}_{\mathbf A}:(\alpha, \beta)\in \mathbb B^2\times \mathbb B^2\}.$

The map $P_\mathbf A$ which we call the `Defining Function' of the domain and the collection of functions $\mathbf{\mathcal{P}}_\mathbf A$ which we call a family of `Test Functions' encode a significant amount of information relevant to our purpose about the homomorphism $\rho_\V$.  For instance $\rho_\V$ is contractive if its restriction to $\mathcal{P}_\mathbf{A}$ is contractive. Also the lack of complete contractivity can often be shown by evaluating $\rho_\V^{(2)}$ on $P_\mathbf{A}$.  Some of the details are outlined in the lemma below.

\begin{lem}\label{lemma1} In the notation fixed in the preceding discussion, we have  
\begin{align*}
&(i)~~\sup_{\|\alpha\|=\|\beta\|=1}\|\rho_\V(p_\A^{(\alpha,\beta)})\| \leq \|\rho_\V^{(2)}(P_\A)\|,\\ 
&(ii)~~\rho_\V~~\mbox{\it  is contractive if and only if~~} \sup_{\|\alpha\|=\|\beta\|=1}\|\rho_\V(p^{(\alpha,\beta)}_\mathbf A)\| \leq 1.
\end{align*}

\end{lem}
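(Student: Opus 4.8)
The plan is to establish both statements by combining the definition of $\rho_\V$, the homomorphism property together with the von Neumann inequality (as in Lemma \ref{9}), and a compactness/extreme-point reduction that connects the test functions $p_\A^{(\alpha,\beta)}$ to the defining function $P_\A$. For part (i), I would first note that since $P_\A(0)=0$, the definition \eqref{13} gives $\rho_\V^{(2)}(P_\A) = \sum_i (\partial_i P_\A(0))\otimes V_i$, and since $P_\A$ is linear, $\partial_i P_\A(0)=A_i$, so $\|\rho_\V^{(2)}(P_\A)\| = \|\sum_i A_i\otimes V_i\|$. Similarly, for a test function $p_\A^{(\alpha,\beta)}$, which is linear and vanishes at $0$, we have $\rho_\V(p_\A^{(\alpha,\beta)}) = \sum_i \partial_i p_\A^{(\alpha,\beta)}(0)\, V_i = \sum_i \langle A_i\alpha,\beta\rangle V_i$. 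The key observation is then that the scalar $\sum_i \langle A_i\alpha,\beta\rangle V_i$ is nothing but the compression $(\beta\otimes I)^* \big(\sum_i A_i\otimes V_i\big)(\alpha\otimes I)$ — i.e., sandwiching the operator $\sum_i A_i\otimes V_i$ between the vectors $\alpha\otimes(\cdot)$ and $\beta\otimes(\cdot)$ in the first tensor leg. Taking $\|\alpha\|=\|\beta\|=1$, this compression has norm at most $\|\sum_i A_i\otimes V_i\|$, and the supremum over such $\alpha,\beta$ yields exactly (i).

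For part (ii), the forward implication ("$\rho_\V$ contractive $\Rightarrow$ sup $\le 1$") is immediate because each $p_\A^{(\alpha,\beta)}$ lies in $\mathcal O(\Omega_\A)$ with $\|p_\A^{(\alpha,\beta)}\|_\infty\le 1$. For the converse, I would invoke the equivalence \eqref{11} from Lemma \ref{9}: $\rho_\V$ is contractive iff $\sup\{\|\sum_i \partial_i f(0) V_i\| : \|f\|_\infty\le 1,\ f(0)=0\}\le 1$, equivalently iff $\|L_\V\|\le 1$ as a map on $(\mathbb C^2,\|\cdot\|_\A^*)$. By Lemma \ref{10}, the dual unit ball $\Omega_\A^*$ consists precisely of the points $(\partial_1 f(0),\dots,\partial_m f(0))$ for $f\in\mathrm{Hol}(\Omega_\A,\mathbb D)$ with $f(0)=0$. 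So it suffices to show that these gradient vectors, as $f$ ranges over the full Schur-type class, are controlled — in norm of their image under $L_\V$ — by the gradient vectors coming from the test functions alone. The point is that $\|\mathbf w\|_\A^* = \sup\{|\langle\mathbf w, \mathbf z\rangle| : \|\mathbf z\|_\A\le 1\} = \sup\{|\sum_j w_j z_j|\}$, and evaluating $P_\A(\mathbf z)$ on unit vectors $\alpha,\beta$ recovers all functionals $\mathbf z\mapsto \sum_j z_j\langle A_j\alpha,\beta\rangle$; by the definition of the operator norm, the extreme points of $\Omega_\A^*$ are captured (up to closure) by these $(\langle A_1\alpha,\beta\rangle,\langle A_2\alpha,\beta\rangle)$. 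Thus any $\mathbf w\in\Omega_\A^*$ is a limit of convex combinations of such test-function gradients, and contractivity of $L_\V$ on the latter set forces $\|L_\V\|\le 1$, hence $\rho_\V$ contractive.

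The main obstacle I anticipate is the converse direction of (ii): rigorously justifying that the gradients of the test functions are "enough" to detect $\|L_\V\|\le 1$. This is really the statement that the image $P_\A(\Omega_\A)$, paired against unit vectors, exhausts the unit ball of the predual, i.e. that $\|\mathbf w\|_\A^* = \sup_{\|\alpha\|=\|\beta\|=1}|\langle P_\A^{\text{(derivative pairing)}}, \mathbf w\rangle|$ — which follows from $\|M\|_{\mathrm{op}} = \sup_{\|\alpha\|=\|\beta\|=1}|\langle M\alpha,\beta\rangle|$ applied to $M = z_1A_1+z_2A_2$, but one must be careful about whether a single holomorphic $f:\Omega_\A\to\mathbb D$ vanishing at $0$ can have gradient outside the closed convex hull of the test-function gradients. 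The resolution is that $\|L_\V\|\le 1$ is a closed convex condition on $L_\V$, and since $\Omega_\A^*$ is the closed convex hull of its extreme points — all of which are of the form $(\langle A_1\alpha,\beta\rangle,\langle A_2\alpha,\beta\rangle)$ with $\|\alpha\|=\|\beta\|=1$ by standard duality for the operator-norm embedding — contractivity on the test functions propagates to all of $\Omega_\A^*$. I would present this via the identity $\|\rho_\V(f)\| = \|\sum_i\partial_i f(0)V_i\| = \|L_\V(\partial f(0))\|$ and the remark that $\partial f(0)\in\overline{\Omega_\A^*}$ with the latter equal to the closed convex hull of the test gradients.
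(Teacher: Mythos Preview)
Your argument for (i) is essentially identical to the paper's: both compute $\rho_\V(p_\A^{(\alpha,\beta)})=\sum_i\langle A_i\alpha,\beta\rangle V_i$ and recognize this as $\langle(\sum_i A_i\otimes V_i)(\alpha\otimes u),\beta\otimes v\rangle$ after testing against unit $u,v$; your ``compression'' phrasing is just a repackaging of the same identity.

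For the converse in (ii) your route differs from the paper's. You argue on the $\A$-side: the test-function gradients $(\langle A_1\alpha,\beta\rangle,\langle A_2\alpha,\beta\rangle)$ form a compact balanced set $S$ whose closed convex hull is $\overline{\Omega_\A^*}$, so contractivity of $L_\V$ on $S$ propagates to all of $\Omega_\A^*$. This is correct, but your justification ``all extreme points of $\Omega_\A^*$ are of this form by standard duality'' is the step that actually needs work; the clean way is the bipolar theorem (the polar of $S$ is $\{z:\sup_{\alpha,\beta}|\langle(z_1A_1+z_2A_2)\alpha,\beta\rangle|\le 1\}=\overline{\Omega_\A}$, hence $S^{\circ\circ}=\overline{\Omega_\A^*}$), after which Milman's theorem gives the extreme-point statement. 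The paper avoids this entirely by dualizing on the $\V$-side instead: expanding $\|\omega_1 V_1+\omega_2 V_2\|_{\rm op}$ as $\sup_{u,v}|\omega_1\langle V_1u,v\rangle+\omega_2\langle V_2u,v\rangle|$, one sees $\|L_\V\|\le 1$ is equivalent to $(\langle V_1u,v\rangle,\langle V_2u,v\rangle)\in\Omega_\A$ for all unit $u,v$, and then expanding the $\|\cdot\|_\A$-norm as a sup over $\alpha,\beta$ yields the symmetric four-variable expression already identified in (i). The paper's double-dualization is shorter and entirely elementary (no bipolar/Krein--Milman), whereas your approach makes explicit the structural fact that the test gradients generate the whole dual ball --- conceptually informative, but heavier machinery for the same conclusion.
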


\begin{proof}[Proof of (i)]
Since 
$$\rho_\V(p_\A^{(\alpha,\beta)})=
\left(\begin{array}{cc}
0 & (\partial_1 p_\A^{(\alpha,\beta)}(0))~ V_1 + (\partial_2 p_\A^{(\alpha,\beta)}(0))~ V_2 \\
0 & 0 \\
\end{array}\right)$$
by definition, it follows that
\begin{eqnarray}
\nonumber \|\rho_\V(p_\A^{(\alpha,\beta)})\| &=&\|(\partial_1 p_\A^{(\alpha,\beta)}(0))~ V_1 + (\partial_2 p_\A^{(\alpha,\beta)}(0))~ V_2\|_{\rm op}\\ \nonumber
&=&\|\langle A_1\alpha,\beta\rangle ~V_1 + \langle A_2\alpha,\beta\rangle~ V_2\|_{\rm op}\\ \nonumber
&=&\sup_{\|u\|=\|v\|=1}~~
| \langle A_1 \alpha,\beta\rangle \langle V_1u, v\rangle +  \langle A_2 \alpha,\beta\rangle \langle V_2u,v\rangle |. 
\end{eqnarray}
Hence
\begin{eqnarray}
\sup_{\|\alpha\|=\|\beta\|=1}\|\rho_\V(p^{(\alpha,\beta)}_\mathbf A)\| \label{1}   &=& \sup_{\|\alpha\|=\|\beta\|=1}~~\sup_{\|u\|=\|v\|=1}~~
| \langle A_1 \alpha,\beta\rangle \langle V_1u, v\rangle +  \langle A_2 \alpha,\beta\rangle \langle V_2u,v\rangle |  \\ \nonumber
&=&\sup_{\|\alpha\|=\|\beta\|=1}~~\sup_{\|u\|=\|v\|=1}~~
| \langle (A_1 \otimes V_1 + A_2 \otimes V_2)\alpha \otimes u, \beta \otimes v\rangle | \\ \nonumber
&=&\sup_{\|\alpha\|=\|\beta\|=1}~~\sup_{\|u\|=\|v\|=1}~~
| \langle \rho_\V^{(2)}(P_\A)\alpha \otimes u, \beta \otimes v\rangle | \\ \nonumber
&\leq&\|\rho_\V^{(2)}(P_\A)\|.
\end{eqnarray}
\end{proof}
\begin{proof}[Proof of (ii)]
As indicated earlier the contractivity of $\rho_\V$ is equivalent to the contractivity of 
$$L_\V:(\mathbb{C}^2,\|\cdot \|_{\A}^*)\rightarrow (\mathcal{M}_{p,q},\|\cdot\|_{\rm op})$$
given by the formula 
$$L_\V(\omega_1,\omega_2)=\omega_1 V_1 + \omega_2 V_2.$$
 So we identify the conditions for the contractivity of $L_\V:$
{\small\begin{align}\|L_\V\|\nonumber&=\sup_{\|(\omega_1,\omega_2)\|_{\mathbf{A}}^*\leq 1}~~\|\omega_1 V_1+\omega_2 V_2\|_{\rm op} \\ \nonumber
&=\sup_{\|(\omega_1,\omega_2)\|_{\mathbf{A}}^*\leq 1}~~~\sup_{\|u\|=\|v\|=1}~~
|\omega_1 \langle V_1u, v\rangle+ \omega_2 \langle V_2u,
v\rangle|.\nonumber
\end{align}}
Hence, since $(\omega_1,\omega_2)$ lies in the dual of $\Omega_\A$,
\begin{align}
\|L_\V\| \leq 1 \nonumber &\iff (\langle V_1 u,v\rangle , \langle V_2 u,v\rangle) \in \Omega_\A ~~~~\forall u,v ~~{\rm such~ that}~~ \|u\|=\|v\|=1\\ \nonumber
&\iff \sup_{\|u\|=\|v\|=1}~~
\| \langle V_1u, v\rangle A_1 + \langle V_2u,
v\rangle A_2 \|_{\rm op} \leq 1 \\ \nonumber
&\iff \sup_{\|\alpha\|=\|\beta\|=1}~~\sup_{\|u\|=\|v\|=1}~~
| \langle A_1 \alpha,\beta\rangle \langle V_1u, v\rangle +  \langle A_2 \alpha,\beta\rangle \langle V_2u,v\rangle |\leq 1\\ \nonumber
&\iff \sup_{\|\alpha\|=\|\beta\|=1}\|\rho_\V(p^{(\alpha,\beta)}_\mathbf A)\| \leq 1~~~ {\rm from}~ (\ref{1})~{\rm above.} 
\end{align}
\end{proof}
As mentioned earlier, by choosing a pair $(V_1,V_2)$ such that the inequality in $(i)$ above is strict, we can often construct a contractive homomorphism which is not completely contractive. We illustrate below choices of $(V_1,V_2)$ for the Euclidean ball for which the inequality is strict.

\begin{ex}(Euclidean Ball)
Choosing $\mathbf A=\left(\left (
\begin{smallmatrix}
1 & 0   \\
0  &  0
\end{smallmatrix}\right ) ,
\left ( \begin{smallmatrix}
0&  1\\
0  &  0
\end{smallmatrix}\right )\right),$ we see that  $\Omega_\mathbf A$
defines the Euclidean ball $\mathbb{B}^2$ in $\mathbb C^2$. Choose
$V_1=(v_{11}\,\, v_{12}), V_2= (v_{21}\,\, v_{22}).$ We will prove
that
$$\sup_{\|\alpha\|=\|\beta\|=1}\| \rho_{\V}(p^{(\alpha,
\beta)}_\mathbf A) \|< \|\rho_{\V}^{(2)}(P_{\mathbf A})\|_{\rm op},$$
if $V_1$ and $V_2$ are linearly independent. 

In fact we can choose $(V_1,V_2)$ such that $\sup_{\|\alpha\|=\|\beta\|=1}\| \rho_{\V}(p^{(\alpha,
\beta)}_\mathbf A) \|\leq 1$ and $\|\rho_{\V}^{(2)}(P_{\mathbf A})\|_{\rm op}>1$.
This example of a contractive homomorphism of the ball algebra which
is not completely contractive was found in \cite{G, GM}.
\begin{thm}\label{thm1} For $\Omega_\mathbf A=\mathbb B^2,$  let $V_1 = \left(\begin{matrix}v_{11} & v_{12}\end{matrix}\right), V_2 = \left(\begin{matrix}v_{21} & v_{22}\end{matrix}\right)$. Then 
\begin{align*}
&(i)~\sup_{\|\alpha\|=\|\beta\|=1}\| \rho_{\V}(p^{(\alpha
,\beta)}_\mathbf A) \|^2 = \|\left(\begin{smallmatrix}v_{11}~&v_{12}\\
v_{21}~&v_{22}\end{smallmatrix}\right)\|_{\rm op}^2\\
 &(ii)~ \|\rho_{\V}^{(2)}(P_{\mathbf A})\|^2_{\rm op} = \|\left(\begin{smallmatrix}v_{11}~&v_{12}\\
v_{21}~&v_{22}\end{smallmatrix}\right)\|_{\rm HS}^2~ {\rm(HS~ represents~ the~ Hilbert-Schmidt~ norm)}
\end{align*}
Consequently, $\sup_{\|\alpha\|=\|\beta\|=1}\| \rho_{\V}(p^{(\alpha,
\beta)}_\mathbf A) \|< \|\rho_{\V}^{(2)}(P_{\mathbf A})\|_{\rm op}$ if $V_1$ and $V_2$ are linearly independent.

\end{thm}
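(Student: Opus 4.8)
The plan is to compute both quantities explicitly as operator norms of concrete matrices built from the $v_{ij}$, using the bilinear structure already exhibited in Lemma~\ref{lemma1}. For part $(ii)$, Proposition~\ref{15} (the two-dimensional specialization, with $A_1,A_2$ the nilpotent matrices generating $\mathbb B^2$, and $V_1,V_2$ now genuine $1\times 2$ row vectors rather than the earlier diagonal special case) tells us $\|\rho_\V^{(2)}(P_\A)\|=\|A_1\otimes V_1+A_2\otimes V_2\|_{\rm op}$. With $A_1=\left(\begin{smallmatrix}1&0\\0&0\end{smallmatrix}\right)$, $A_2=\left(\begin{smallmatrix}0&1\\0&0\end{smallmatrix}\right)$, the Kronecker sum $A_1\otimes V_1+A_2\otimes V_2$ is a $2\times 4$ matrix whose only nonzero rows assemble $V_1$ and $V_2$ into the top row block; I would write this matrix out, observe it has rank $\le 1$ in the obvious block sense, and identify its single nonzero singular value. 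A direct computation of $(A_1\otimes V_1+A_2\otimes V_2)(A_1\otimes V_1+A_2\otimes V_2)^*$ gives a $2\times 2$ matrix equal to $\left(\begin{smallmatrix}\|V_1\|^2 & \langle V_1,V_2\rangle\\ \langle V_2,V_1\rangle & \|V_2\|^2\end{smallmatrix}\right)$ padded by a zero row and column, but in fact because of the block structure the operator norm squared collapses to $\|V_1\|^2+\|V_2\|^2 = \sum_{i,j}|v_{ij}|^2$, which is exactly the Hilbert--Schmidt norm squared of $\left(\begin{smallmatrix}v_{11}&v_{12}\\ v_{21}&v_{22}\end{smallmatrix}\right)$. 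I would double-check this by noting $A_1\otimes V_1+A_2\otimes V_2$ maps $e_1\otimes u$ to $(\langle V_1,\bar u\rangle, 0)$-type vectors and is supported on a single copy of $\mathbb C^2$, so its norm is the norm of the map $u\mapsto (\langle V_1 u\rangle,\langle V_2 u\rangle)^t$ from $\mathbb C^2$ to $\mathbb C^2$, whose Hilbert--Schmidt norm and operator norm relationship is not what we want—so the key is that the \emph{first} slot is a vector, giving $\|\sum_i (A_i)e_1 \otimes (V_i u)\|$, and this unpacks to $\sqrt{\|V_1 u\|^2 + \|V_2 u\|^2}$ maximized over $\|u\|=1$... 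I'll need to be careful here, but the upshot is the stated HS norm.

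For part $(i)$, I would start from equation~(\ref{1}) of Lemma~\ref{lemma1}$(i)$, which already gives
\[
\sup_{\|\alpha\|=\|\beta\|=1}\|\rho_\V(p_\A^{(\alpha,\beta)})\|
=\sup_{\|\alpha\|=\|\beta\|=\|u\|=\|v\|=1}
|\langle A_1\alpha,\beta\rangle\langle V_1 u,v\rangle+\langle A_2\alpha,\beta\rangle\langle V_2 u,v\rangle|.
\]
For the Euclidean ball, $\langle A_1\alpha,\beta\rangle=\alpha_1\bar\beta_1$ and $\langle A_2\alpha,\beta\rangle=\alpha_2\bar\beta_1$, so the inner sum factors as $\bar\beta_1\big(\alpha_1\langle V_1u,v\rangle+\alpha_2\langle V_2u,v\rangle\big)$. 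Taking $|\beta_1|=1$ (allowed, $\beta=e_1$) and then the supremum over $\|\alpha\|=1$ turns this into $\big(|\langle V_1u,v\rangle|^2+|\langle V_2u,v\rangle|^2\big)^{1/2}$ by Cauchy--Schwarz, and finally, since $v$ is also free, $\sup_{\|u\|=\|v\|=1}(|\langle V_1u,v\rangle|^2+|\langle V_2u,v\rangle|^2)^{1/2}=\sup_{\|u\|=1}\big\|\left(\begin{smallmatrix}V_1\\ V_2\end{smallmatrix}\right)u\big\|=\|\left(\begin{smallmatrix}v_{11}&v_{12}\\ v_{21}&v_{22}\end{smallmatrix}\right)\|_{\rm op}$. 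The last equality here uses that $(\langle V_1 u,v\rangle,\langle V_2u,v\rangle)$ ranges over a disc of radius $\|(V_1u,V_2u)^t\|$ as $v$ varies over the unit sphere. This establishes $(i)$.

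The final consequence is then immediate from the classical inequality $\|M\|_{\rm op}\le\|M\|_{\rm HS}$ for the $2\times 2$ matrix $M=\left(\begin{smallmatrix}v_{11}&v_{12}\\ v_{21}&v_{22}\end{smallmatrix}\right)$, with equality if and only if $M$ has rank $\le 1$, i.e., if and only if the rows $V_1,V_2$ are linearly dependent; so linear independence forces the strict inequality $\sup_{\|\alpha\|=\|\beta\|=1}\|\rho_\V(p_\A^{(\alpha,\beta)})\|<\|\rho_\V^{(2)}(P_\A)\|_{\rm op}$. I expect the main obstacle to be purely bookkeeping: correctly tracking which vectors are $2\times 1$ versus $1\times 2$, making sure the tensor-factor conventions in $A_i\otimes V_i$ match those in~(\ref{13}), and verifying that the suprema over $\alpha,\beta,u,v$ genuinely decouple in the stated way for this specific $\mathbf A$ — there is no conceptual difficulty, only the risk of an index slip or a misplaced conjugate.
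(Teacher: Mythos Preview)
Your approach is correct and matches the paper's proof: direct computation of both quantities using the explicit $A_1,A_2$ of the Euclidean ball, followed by the inequality $\|M\|_{\rm op}\le\|M\|_{\rm HS}$ with equality iff $\operatorname{rank} M\le 1$. Your intermediate description of $(A_1\otimes V_1+A_2\otimes V_2)(A_1\otimes V_1+A_2\otimes V_2)^*$ in part $(ii)$ is slightly off --- the cross terms vanish since $A_1A_2^*=0$, so one gets $\operatorname{diag}(\|V_1\|^2+\|V_2\|^2,\,0)$ directly --- but you reach the right conclusion, which is precisely the bookkeeping hazard you flag at the end.
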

\begin{proof} By the definition of $\rho_{\V}$ we have
\begin{align*}
\sup_{\|\alpha\|=\|\beta\|=1}\|\rho_{\V}(p^{(\alpha
,\beta)}_\mathbf A)\|^2&=\sup_{\|\alpha\|=\|\beta\|=\|u\|=\|v\|=1}
 |\langle
A_{1}\alpha, \beta\rangle \langle V_1u, v\rangle+ \langle
A_{2}\alpha, \beta\rangle \langle V_2u,
v\rangle|^2\\&=\sup_{\|\alpha\|=\|\beta\|=\|u\|=1}|\alpha_1(v_{11}u_1+v_{12}u_2)+\alpha_2(v_{21}u_1+v_{22}u_2)|^2
|\beta_1|^2\\
&=\sup_{\|\alpha\|=\|u\|=1}|\alpha_1(v_{11}u_1+v_{12}u_2)+\alpha_2(v_{21}u_1+v_{22}u_2)|^2
\\&=\sup_{\|u\|=1}|v_{11}u_1+v_{12}u_2|^2+|v_{21}u_1+v_{22}u_2|^2\\&=\big\|\left(\begin{smallmatrix} v_{11}
& v_{12}\\
v_{21} & v_{22}\end{smallmatrix}\right)\big\|^{2}_{\rm
op}.\end{align*} On the other hand, we have  $$\|\rho_{\V}^{(2)}(P_{\mathbf
A})\|_{\rm op}^2=\|V_1\|^2+\|V_2\|^2 = \|\left(\begin{smallmatrix}v_{11}~&v_{12}\\
v_{21}~&v_{22}\end{smallmatrix}\right)\|^2_{\rm HS}.$$ 
 
If $V_1$ and $V_2$ are linearly independent $$\big\|\left(\begin{smallmatrix} v_{11}
& v_{12}\\
v_{21} & v_{22}\end{smallmatrix}\right)\big\|^{2}_{\rm op}
<\|\left(\begin{smallmatrix}v_{11}~&v_{12}\\
v_{21}~&v_{22}\end{smallmatrix}\right)\|^2_{\rm HS}$$ and we have
$$\sup_{\|\alpha\|=\|\beta\|=1}\| \rho_{\V}(p^{(\alpha,
\beta)}_\mathbf A) \|< \|\rho_{\V}^{(2)}(P_{\mathbf A})\|_{\rm op}.$$
\end{proof}

Now choose $V_1=\left(\begin{matrix}1 & 0\end{matrix}\right)$ and $V_2=\left(\begin{matrix}0 & 1\end{matrix}\right)$. From Lemma \ref{lemma1} and Theorem \ref{thm1} it follows that $\rho_\V$ is contractive but $\|\rho_\V^{(2)}(P_\A)\|=\sqrt{2}$.
\end{ex}

\section{Unitary Equivalence and Linear Equivalence}\label{unit-equiv}
If $U$  and $W$ are $2\times 2$  unitary matrices and $\widetilde{\A} = (U A_1 W, U A_2 W),$  then
$$\|(z_1,z_2)\|_{\A} = \|z_1 A_1 + z_2 A_2\|_{\rm op}
= \|z_1 (U A_1 W) + z_2 (U A_2 W) \|_{\rm op} = \|(z_1,z_2)\|_{\widetilde{\A}}.$$
There are, therefore, various choices of the matrix pair $(A_1,A_2)$ related as above which give rise to the same norm.  We use this freedom to ensure that $A_1$ is diagonal.
Consider the invertible linear transformation $(\tilde{z}_1,\tilde{z}_2)\mapsto (z_1,z_2)$ on $\mathbb C^2$ defined as follows:  

\noindent For $\tilde{\bf z}=(\tilde{z}_1,\tilde{z}_2)$ in $\mathbb{C}^2,$  let
\begin{eqnarray*}
z_1 &=& p \tilde{z}_1 + q \tilde{z}_2\\
z_2 &=& r \tilde{z}_1 + s \tilde{z}_2,
\end{eqnarray*}
where $p,q,r,s \in \mathbb C$.
Then $$\|(z_1,z_2)\|_\A = \|(\tilde{z}_1,\tilde{z}_2)\|_{\widetilde{\A}},$$
where $\widetilde{\A}$ is related to $\A$ as follows:
\begin{eqnarray*}
\widetilde{A}_1 &=& p A_1 + r A_2\\
\widetilde{A}_2 &=& q A_1 + s A_2.
\end{eqnarray*}
More concisely, if $T$ is the linear transformation above on $\mathbb C^2$, then 
$$\|T\tilde{\mathbf{z}}\|_\A = \|\tilde{\mathbf{z}}\|_{\A(T\otimes I)}.$$
In particular $T$ maps $\Omega_{\widetilde{\A}}$ onto $\Omega_{\A}$.

\begin{lem}\label{lemma2} For $k=1,2,\ldots,$ the contractivity  of the linear maps $L^{(k)}_\V$ defined on \\$(\mathbb C^2\otimes\mathcal{M}_k, \|\cdot\|_{\widetilde{\A},k}^*)$  determine the  contractivity  of the linear maps $L^{(k)}_{\widetilde{\V}}$ 
defined on $(\mathbb C^2\otimes \mathcal{M}_k, \|\cdot\|_{{\A},k}^*)$ and conversely, where  $\widetilde{\A} = \A(T\otimes I)$ and $\widetilde{\V} = (T\otimes I)\V.$
\end{lem}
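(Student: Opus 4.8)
The plan is to exhibit an explicit correspondence between the data $(\V,\A)$ and $(\widetilde\V,\widetilde\A)$ at the level of the amplified linear maps, and to show it is an isometry on the relevant normed spaces, so that contractivity is transported in both directions. Recall from the preceding subsections that, for any $k$, the map $L^{(k)}_\V$ acts on $\mathbb C^2\otimes\mathcal M_k$ equipped with the norm dual to $\|\cdot\|_{\A,k}$, by the rule $L^{(k)}_\V(\Theta_1,\Theta_2)=\Theta_1\otimes V_1+\Theta_2\otimes V_2$; and that $\|\cdot\|_{\A,k}$ on $\mathbb C^2\otimes\mathcal M_k$ is, by definition, the norm obtained from the embedding $\mathbf z\otimes\Theta\mapsto (z_1A_1+z_2A_2)\otimes\Theta$ into $\mathcal M_{2n}$, i.e. $\|(\Theta_1,\Theta_2)\|_{\A,k}=\|A_1\otimes\Theta_1+A_2\otimes\Theta_2\|_{\rm op}$. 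The identity $\|T\tilde{\mathbf z}\|_\A=\|\tilde{\mathbf z}\|_{\A(T\otimes I)}$ recorded just above the lemma, amplified by $\otimes I_k$, says exactly that $T\otimes I_k$ is an isometry from $(\mathbb C^2\otimes\mathcal M_k,\|\cdot\|_{\widetilde\A,k})$ onto $(\mathbb C^2\otimes\mathcal M_k,\|\cdot\|_{\A,k})$, since $\widetilde\A=\A(T\otimes I)$.

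First I would record that an invertible isometry between two normed spaces induces an invertible isometry between their duals, via the inverse adjoint $(T\otimes I_k)^{-*}$. Thus $(T\otimes I_k)^{-*}$ carries $(\mathbb C^2\otimes\mathcal M_k,\|\cdot\|^*_{\A,k})$ isometrically onto $(\mathbb C^2\otimes\mathcal M_k,\|\cdot\|^*_{\widetilde\A,k})$. Next I would verify the algebraic compatibility: with $\widetilde\V=(T\otimes I)\V$, meaning $\widetilde V_1=pV_1+rV_2$ and $\widetilde V_2=qV_1+sV_2$ (the same scalar matrix $\left(\begin{smallmatrix}p&q\\r&s\end{smallmatrix}\right)$ that defines $T$), a direct substitution shows
\begin{align*}
L^{(k)}_{\widetilde\V}\big((T\otimes I_k)(\Theta_1,\Theta_2)\big)
&= L^{(k)}_\V(\Theta_1,\Theta_2),
\end{align*}
because tensoring is bilinear and the coefficient bookkeeping matches: $\Theta_1\otimes\widetilde V_1+\Theta_2\otimes\widetilde V_2$ after expressing $\widetilde V_i$ in terms of $V_i$ regroups into $(p\Theta_1+q\Theta_2)\otimes V_1+(r\Theta_1+s\Theta_2)\otimes V_2$, which is $L^{(k)}_\V$ evaluated at $(T\otimes I_k)(\Theta_1,\Theta_2)=(p\Theta_1+q\Theta_2,\,r\Theta_1+s\Theta_2)$. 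Hence $L^{(k)}_\V=L^{(k)}_{\widetilde\V}\circ(T\otimes I_k)$ as maps, with the target $(\mathcal M_k\otimes\mathcal M_{p,q},\|\cdot\|_{\rm op})$ untouched.

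Combining the two points: $\|L^{(k)}_\V\|$ is computed on the domain $(\mathbb C^2\otimes\mathcal M_k,\|\cdot\|^*_{\widetilde\A,k})$, whereas $L^{(k)}_\V$ factors as $L^{(k)}_{\widetilde\V}$ precomposed with $T\otimes I_k$, which is a surjective isometry from $(\mathbb C^2\otimes\mathcal M_k,\|\cdot\|^*_{\widetilde\A,k})$ onto $(\mathbb C^2\otimes\mathcal M_k,\|\cdot\|^*_{\A,k})$ — here I use that $T\otimes I_k$ itself, not merely its inverse adjoint, is the isometry needed on the dual side, which follows because $T^{-1}$ plays the symmetric role with $\widetilde\A\leftrightarrow\A$ interchanged, so one should phrase the duality in whichever direction makes the precomposition land correctly. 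Therefore $\|L^{(k)}_\V\|=\|L^{(k)}_{\widetilde\V}\|$ for every $k$, and in particular one is contractive if and only if the other is; running the argument with $T^{-1}$ in place of $T$ gives the converse direction verbatim. The one place demanding care — and the main obstacle — is getting the duality bookkeeping exactly right: one must be sure that the isometry $T\otimes I_k:(\mathbb C^2\otimes\mathcal M_k,\|\cdot\|_{\widetilde\A,k})\to(\mathbb C^2\otimes\mathcal M_k,\|\cdot\|_{\A,k})$ dualizes to an isometry going in the direction that matches the factorization $L^{(k)}_\V=L^{(k)}_{\widetilde\V}\circ(T\otimes I_k)$, rather than the opposite one; reconciling these (e.g. by passing to $(T\otimes I_k)^{-1}$ and the identity $\|\cdot\|_{\widetilde\A}=\|\cdot\|_{\A(T^{-1}\otimes I)}^{-1}$-type relation, or simply by checking both $\le$ inequalities using $T$ and $T^{-1}$ separately) is where a careless sign or transpose would creep in. Everything else is bilinearity of the tensor product and the definition of the dual norm.
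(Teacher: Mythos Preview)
Your approach is correct in substance and genuinely different from the paper's. The paper does not work with the primal--dual isometry directly; instead it invokes the function--theoretic description of the unit ball of $(\mathbb C^2,\|\cdot\|_{\A}^*)$ from Lemma~\ref{10} (and its $k$--amplified analogue), sets up the bijection $f\mapsto \widetilde f=f\circ T$ between $\{f\in\mathrm{Hol}(\Omega_\A,\mathbb D):f(0)=0\}$ and $\{\widetilde f\in\mathrm{Hol}(\Omega_{\widetilde\A},\mathbb D):\widetilde f(0)=0\}$, and then uses the chain rule $D(f\circ T)(0)=Df(0)\,T$ to pass from $Df(0)\cdot\V$ to $Df(0)\cdot(T\otimes I)\V$. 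Your argument bypasses the holomorphic--function description entirely and works at the level of finite--dimensional normed spaces: $T\otimes I_k$ is a surjective isometry on the primal side, hence its transpose is one on the dual side, and the two $L^{(k)}$'s differ by precomposition with that transpose. This is cleaner and more conceptual; the paper's route has the advantage of being tied directly to the definition of $\|\cdot\|_{\A,k}^*$ actually used (namely the derivative--ball description), so no separate identification of that norm with an honest Banach--space dual is needed.

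One genuine wrinkle in your write--up: your displayed identity $L^{(k)}_{\widetilde\V}\big((T\otimes I_k)\Theta\big)=L^{(k)}_\V(\Theta)$ and the verbal computation that follows it are inconsistent --- the computation actually shows $L^{(k)}_{\widetilde\V}(\Theta)=L^{(k)}_\V\big((T\otimes I_k)\Theta\big)$, i.e.\ the composition goes the other way; and with the paper's convention $\widetilde V_1=pV_1+qV_2,\ \widetilde V_2=rV_1+sV_2$ the factor on the dual side is $T^{\mathrm t}\otimes I_k$, not $T\otimes I_k$. You flag this at the end as ``bookkeeping'', and it is, but it would be worth writing one clean line: $L^{(k)}_{\widetilde\V}=L^{(k)}_\V\circ(T^{\mathrm t}\otimes I_k)$, together with the observation that $T^{\mathrm t}\otimes I_k$ is the adjoint (for the bilinear pairing) of the isometry $T\otimes I_k:(\|\cdot\|_{\widetilde\A,k})\to(\|\cdot\|_{\A,k})$ and hence is itself an isometry $(\|\cdot\|^*_{\A,k})\to(\|\cdot\|^*_{\widetilde\A,k})$. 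With that, the equality of norms is immediate and nothing is left to ``reconcile''.
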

\begin{proof}
For $k=1,2,\ldots,$ we have to show that 
$$\|L^{(k)}_{\V}\|_{(\mathbb C^2 \otimes \mathcal
M_{k},\|\cdot\|_{\widetilde{\A}, k}^*) \rightarrow (\mathcal
M_{k}\otimes \mathcal M_{p,q} , \|\cdot\|_{\rm op})}\leq1 \iff \|L^{(k)}_{\widetilde{\V}}\|_{(\mathbb C^2 \otimes \mathcal
M_{k}, \|\cdot\|_{{\A}, k}^*)\rightarrow (\mathcal M_{k}\otimes
\mathcal M_{p,q} , \|\cdot\|_{\rm op})}\leq 1.$$

We prove this result for the case $k=1$, that is, for the map $L_\V$.  The proof for the general case is similar.

Consider the bijection between the spaces $\{f\in{\rm Hol}(\Omega_\A,\mathbb D), f(0)=0\}$ and \\
$\{\widetilde{f}\in{\rm Hol}(\Omega_{\widetilde{\A}},\mathbb D), \widetilde{f}(0)=0\}$ defined as follows:
$$f\mapsto \widetilde{f}=f\circ T, ~~~~~\widetilde{f}\mapsto f =\widetilde{f}\circ T^{-1}$$
Using this bijection
\begin{align*}
\|\,L_{\V}\,\|_{(\mathbb C^2, \|\cdot \|_{\widetilde{\A}}^* )\rightarrow (\mathcal M_{p,q}, \|\cdot\|_{\rm op})}\leq 1
&\iff\sup_{\widetilde{f}} \{\|D\widetilde{f} (0) \cdot \V\|_{\rm op}: 
\widetilde{f}\in{\rm Hol}(\Omega_{\widetilde{\A}},\mathbb D), \widetilde{f}(0)=0\}\leq 1\\
&\iff\sup_{f} \{\|D(f\circ T) (0) \cdot \V\|_{\rm op}: 
f\in{\rm Hol}(\Omega_{\A},\mathbb D), f(0)=0\}\leq 1\\
&\iff \sup_{f} \{\|Df(0)~ T \cdot \V\|_{\rm op}: 
f\in{\rm Hol}(\Omega_{\A},\mathbb D), f(0)=0\}\leq 1\\
&\iff\sup_{f} \{\|Df(0) \cdot (T \otimes I) \V\|_{\rm op}: 
f\in{\rm Hol}(\Omega_{\A},\mathbb D), f(0)=0\}\leq 1\\
&\iff\|\,L_{(T\otimes I)\V}\,\|_{(\mathbb C^2, \|\cdot \|_{\A}^*)\rightarrow (\mathcal M_{p,q}, \|\cdot\|_{\rm
op})}\leq 1
\end{align*}
In the above, $Df$ is a row vector, $T$ is a $2\times 2$ matrix and by an expression of the form $X\cdot Y$ we mean $\sum_{i=1}^2 X_i Y_i$. \end{proof}

It follows that, in our study of the existence of contractive homomorphisms which are not completely contractive, two sets of matrices $\A = (A_1,A_2)$ and $\widetilde{\A} = (\widetilde{A}_1, \widetilde{A}_2)$ which are related through linear combinations as above yield the same result.  We can, therefore, restrict our attention to a subcollection of matrices.

Since $A_1$ has already been chosen to be diagonal, we consider transformations as above with $r=0$ to preserve the diagonal structure of $A_1$.  By choosing the parameters $p,q,s$ suitably we can ensure that one diagonal entry of $A_1$ is $1$ and the diagonal entries of $A_2$ are $1$ and $0$.  By further conjugating with a diagonal unitary and a permutation matrix it follows that we need to consider only the following three families of matrices:

\begin{center}

\begin{table}[ht]\label{Table1}
\caption{Cases modulo unitary and linear equivalence}
\begin{tabular}{ |c|c| }
\hline
$A_1$ & $A_2$ \\ \hline \hline

{$\begin{array}{c}
\\
\left(\begin{array}{cc}
1 &  0 \\
0 & d \\
\end{array}\right)\\
\\
\end{array} d\in \mathbb C$}  & $\begin{array}{c}\\
\left(\begin{array}{cc}
1 &  b \\
c & 0 
\end{array}\right)c\in \mathbb C,b\in\mathbb R_+
\\
\\
\end{array}$  \\ \hline

{$\begin{array}{c}
\\
\left(\begin{array}{cc}
d &  0 \\
0 & 1 \\
\end{array}\right)\\
\\
\end{array} d\in \mathbb C$} 
 & $\begin{array}{c} 
 \\
 \left(\begin{array}{cc}
1 &  b \\
c & 0 \\
\end{array}\right)c\in \mathbb C,b\in \mathbb R_+
\\
\\
\end{array}$ \\  \hline
{$\begin{array}{c}
\\
\left(\begin{array}{cc}
1 &  0 \\
0 & d \\
\end{array}\right)\\
\\
\end{array} d\in \mathbb C$} 
 & $\begin{array}{c} 
 \\
 \left(\begin{array}{cc}
0 &  b \\
c & 0 \\
\end{array}\right)c\in \mathbb C,b\in \mathbb R_+
\\
\\
\end{array}$ \\  \hline

\end{tabular}
\end{table}
\end{center}
In the above, $\mathbb{R}_+$ represents the set of non-negative real numbers.


\subsection{Simultaneously Diagonalizable Case}

For the study of contractivity and complete contractivity in this situation we consider two possibilities.  The first when $A_1$ and $A_2$ are simultaneously diagonalizable and the second when they are not.  The simultaneously diagonalizable case reduces to the case of the bi-disc where we know that any contractive homomorphism is completely contractive.  In all the other cases (when $A_1$ and $A_2$ are not simultaneously diagonalizable) we show that there exists a contractive homomorphism which is not completely contractive. 

Consider first the case when $A_1$ and $A_2$  are simultaneously diagonalizable. Based on the discussion of linear equivalence above we need to study only the following possibilities:
\begin{center}
\begin{table}[ht]\label{Table2}
\caption{Simultaneously diagonalizable cases}
\begin{tabular}{ |c|c| }
\hline
$A_1$ & $A_2$ \\ \hline \hline

{$\begin{array}{c}
\\
\left(\begin{array}{cc}
1 &  0 \\
0 & d \\
\end{array}\right)\\
\\
\end{array} d\in \mathbb C$}  & $\begin{array}{c}\\
\left(\begin{array}{cc}
1 &  0 \\
0 & 0 
\end{array}\right)
\\
\\
\end{array}$  \\ \hline

{$\begin{array}{c}
\\
\left(\begin{array}{cc}
d &  0 \\
0 & 1 \\
\end{array}\right)\\
\\
\end{array} d\in \mathbb C$} 
 & $\begin{array}{c} 
 \\
 \left(\begin{array}{cc}
1 &  0 \\
0 & 0 \\
\end{array}\right)
\\
\end{array}$ \\  \hline
\end{tabular}
\end{table}
\end{center}

Applying linear transformations as before, both cases can be reduced to $\mathbf A=\left(\left (
\begin{smallmatrix}
1 & 0   \\
0  &  0
\end{smallmatrix}\right ) ,
\left ( \begin{smallmatrix}
0&  0\\
0  &  1
\end{smallmatrix}\right )\right)$ which represents the bi-disc.
As mentioned earlier, it is known that any contractive homomorphism is completely contractive in this case.
We now study the situation when $A_1$ and $A_2$ are not simultaneously diagonalizable.

\section{Contractivity, Complete Contractivity and Operator Space Structures}\label{opspace}

We recall some notions about operator spaces which are relevant to our purpose.

\begin{defn} (cf. \cite[Chapter 13, 14]{paulsen}) \label{def1}
An abstract operator space is a linear space $X$ together
with a family of norms $\|\cdot\,\|_{k}$ defined on $\mathcal
M_{k}(X)$, $k=1,2, 3, \ldots ,$ where $\|\,\cdot\|_1$ is simply a
norm on the linear space $X$. These norms are required to
satisfy the following compatibility conditions:
\begin{enumerate}
\item $\|T \oplus S\|_{p+q}=\max\{\|T\|_p, \|S\|_q\}$ and \item
$\|ASB\|_{p} \leq \|A\|_{\rm op}\|S\|_q\|B\|_{\rm op}$
\end{enumerate}
for all $S \in \mathcal M_q(X),~ T \in \mathcal M_p(X)$ and $A \in \mathcal M_{p, q}(\mathbb C), B \in \mathcal
M_{q, p}(\mathbb C).$
\end{defn}
Two such operator spaces $(X, \|\cdot\|_{ k})$ and
$(Y, \|\cdot\|_{ k})$  are said to be completely isometric
if there is a linear bijection $T : X \to Y$ such
that $T \otimes I_{k}:(\mathcal M_{k}(X), \|\cdot\|_{
k})\to (\mathcal M_{k}(Y), \|\cdot\|_{ k})$ is an isometry
for every $k \in \mathbb N.$ Here we have identified $\mathcal
M_k(X)$ with $X \otimes \mathcal M_k$ in the usual
manner. We note that a normed linear space $(X,
\|\,\cdot\|)$ admits an operator space structure if and only if
there is an isometric embedding of it into the algebra of
operators $\mathcal B(\mathcal H)$ on some Hilbert space $\mathcal
H$. This is the well-known theorem of Ruan (cf. \cite{pisier}).
 
We recall here the notions of MIN and
MAX operator spaces and a measure of their distance, $\alpha(X)$, following \cite[Chapter 14]{paulsen}.
\begin{defn}
The MIN operator structure $MIN(X)$ on a (finite
dimensional) normed linear space $X$ is obtained by isometrically
embedding $ X $ in the $C^*$ algebra $C\big ( (X^*)_1 \big ),$ of continuous functions on the unit ball $(X^*)_1$ of the dual space. Thus for $(\!(v_{ij})\!)$ in $\mathcal
M_k(X)$, we set
$$\|(\!(v_{ij})\!)\|_{MIN}=\|(\!(\widehat{v_{ij}})\!)\|=\sup\{\|(\!(f(v_{ij}))\!)\|:f
\in (X^*)_1\},$$ where the norm of a scalar matrix
$(\!(f(v_{ij}))\!)$ in $\mathcal M_k$ is the operator norm.
\end{defn}
For an arbitrary $k \times k$ matrix over $X,$ we simply
write $\|(\!(v_{ij})\!)\|_{MIN(X)}$ to denote its norm in
$\mathcal M_{k}(X).$ This is the minimal way in which we
represent the normed space as an operator space. There is also a `maximal' representation which is denoted $MAX(X).$
\begin{defn}
The operator space $MAX(X)$ is defined by setting
$$\|(\!(v_{ij})\!)\|_{MAX}=\sup\{\|(\!(T(v_{ij}))\!)\|:T:X \rightarrow B(\mathcal H) \},$$
 and the supremum is taken over all isometries $T$
and all Hilbert spaces $\mathcal H.$
\end{defn}
Every operator space structure on a normed linear space $X$ `lies between' $MIN(X)$ and $MAX(X)$. The extent to which the two operator space structures $MIN(X)$ and $MAX(X)$ differ is characterized by the constant
$\alpha(X)$ introduced by Paulsen (cf.\cite[Chapter 14]{paulsen}), which we recall below.
\begin{defn}
The constant $\alpha(X)$ is defined as
$$\alpha(X)=\sup\{\|(\!(v_{ij})\!)\|_{MAX}:\|(\!(v_{ij})\!)\|_{MIN}
\leq 1,\,\, (\!(v_{ij})\!) \in \mathcal M_k(X), k\in
\mathbb N\}.$$
\end{defn}
Thus $\alpha(X)=1$ if and only if the identity map is a
complete isometry from $MIN(X)$ to $MAX(X).$
Equivalently, we conclude that there exists a unique operator space
structure on $X$ whenever $\alpha(X)$ is $1$.
Therefore, those normed linear spaces for which $\alpha(X)=1$ are rather special. Unfortunately, there aren't too many of
them! The familiar examples are 
$(\mathbb C^2,\|\cdot\|_{\infty}),$ and consequently $\mathbb
C^2$ with the $\ell_1$ norm. 
It  is pointed out in \cite[pp. 76]{pisier}) that $\alpha(X)>1$ for $\dim(X) \geq 3$, refining an earlier result of Paulsen that $\alpha(X)>1$ whenever $\dim(X) \geq 5$.  This leaves the question open for normed linear spaces  whose
dimension is $2$. 

Returning to the space $(\mathbb C^2, \|\cdot \|_{\mathbf A})$ with $\|(z_1,z_2)\|_\A=\|z_1 A_1 + z_2 A_2\|_{\rm op},$ we show below that $\alpha(\Omega_{\mathbf A}) > 1$ in a large number of cases. From \cite[Theorem 4.2]{vern}, it therefore follows that, in all these cases, there must exist a contractive homomorphism of $\mathcal O(\Omega_{\mathbf A})$ into the algebra $B(\mathcal H)$ which is not completely contractive.  In the remaining cases, the existence of a contractive homomorphism which is not completely contractive is established by a careful study of certain extremal problems.
%
%

The norm $\|(z_1,z_2)\|_\A=\|z_1 A_1 + z_2 A_2\|_{\rm op}$ defines a  natural isometric embedding into $\mathcal {M}_2(\mathbb C)$ given by $(z_1,z_2) \mapsto z_1 A_1 + z_2 A_2$.  However, note that
$$\|(z_1,z_2)\|_\A=\|z_1 A_1 + z_2 A_2\|_{\rm op} = \|z_1 A_1^{\rm t} + z_2 A_2^{\rm t}\|_{\rm op} = \|(z_1,z_2)\|_{\A^{\rm t}}.$$
We, therefore, get another isometric embedding into  $\mathcal {M}_2(\mathbb C)$ given by $(z_1,z_2) \mapsto z_1 A_1^{\rm t} + z_2 A_2^{\rm t}$.

In a variety of cases the operator spaces determined by these two embeddings are distinct and the parameter $\alpha > 1$ in these cases. 
Therefore, the existence of contractive homomorphisms which are not completely contractive is established in these cases. We present the details below.

Recall the map $P_{\A}$ defined earlier by $P_{\A}(z_1,z_2) = z_1 A_1 + z_2 A_2$.  Let $P_{\A}^{(2)}=P_{\A} \otimes I_2$. For the three families of matrices $\A=(A_1,A_2)$ characterized in Table 1 we show that $\A$ and $\A^{\rm t}$ define distinct operator space structures unless $|d|=1$ or $b=|c|$.

\begin{thm}\label{3}
Let  $Z_1=\left(\begin{smallmatrix}1  & 0\\
0 & 0\end{smallmatrix}\right)$ and $Z_2=\left(\begin{smallmatrix}0  & 1\\
0 & 0\end{smallmatrix}\right)$. If $|d|\neq 1$ and $b\neq|c|$ then $\|P_{\mathbf
A}^{(2)}(Z_1,Z_2)\|_{\rm op}\neq\|P_{\mathbf A^{\rm t}}^{(2)}(Z_1,Z_2)\|_{\rm
op}$.
\end{thm}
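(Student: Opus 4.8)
The plan is to compute both operator norms explicitly as functions of the parameters $b, c, d$ and then compare them. The key observation is that $P_{\mathbf A}^{(2)}(Z_1,Z_2) = Z_1 \otimes A_1 + Z_2 \otimes A_2$, where $Z_1 = \left(\begin{smallmatrix}1 & 0\\ 0 & 0\end{smallmatrix}\right)$ and $Z_2 = \left(\begin{smallmatrix}0 & 1\\ 0 & 0\end{smallmatrix}\right)$. First I would write out this $4\times 4$ matrix in block form: since $Z_1$ picks out the $(1,1)$ block and $Z_2$ the $(1,2)$ block, we get $P_{\mathbf A}^{(2)}(Z_1,Z_2) = \left(\begin{smallmatrix} A_1 & A_2 \\ 0 & 0\end{smallmatrix}\right)$, viewed as a map $\mathbb C^2 \oplus \mathbb C^2 \to \mathbb C^2 \oplus \mathbb C^2$ whose range lies in the first summand. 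Its operator norm therefore equals $\|(A_1\ A_2)\|_{\rm op}$, the norm of the $2\times 4$ matrix obtained by placing $A_1$ and $A_2$ side by side, which in turn equals $\|A_1 A_1^* + A_2 A_2^*\|_{\rm op}^{1/2}$, the square root of the largest eigenvalue of the $2\times 2$ positive matrix $A_1 A_1^* + A_2 A_2^*$.

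Next I would carry this out for the first family in Table 1, with $A_1 = \left(\begin{smallmatrix}1 & 0\\ 0 & d\end{smallmatrix}\right)$ and $A_2 = \left(\begin{smallmatrix}1 & b\\ c & 0\end{smallmatrix}\right)$. A direct computation gives $A_1 A_1^* + A_2 A_2^* = \left(\begin{smallmatrix} 2 + b^2 & \bar c \\ c & |c|^2 + |d|^2 \end{smallmatrix}\right)$, whose largest eigenvalue is $\tfrac12\big(\mathrm{tr} + \sqrt{\mathrm{tr}^2 - 4\det}\big)$ with $\mathrm{tr} = 2 + b^2 + |c|^2 + |d|^2$ and $\det = (2+b^2)(|c|^2+|d|^2) - |c|^2$. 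For $\mathbf A^{\rm t}$ one replaces $A_i$ by $A_i^{\rm t}$, i.e. one swaps the roles of $b$ and $c$ in the off-diagonal positions of $A_2$ (and transposes $A_1$, which is unchanged since it is diagonal): this gives $A_1^{\rm t}(A_1^{\rm t})^* + A_2^{\rm t}(A_2^{\rm t})^* = \left(\begin{smallmatrix} 1 + |c|^2 + b^2 & b \\ b & b^2 + |d|^2 \end{smallmatrix}\right)$ — so now $\mathrm{tr}' = 1 + |c|^2 + 2b^2 + |d|^2$ and $\det' = (1+|c|^2+b^2)(b^2+|d|^2) - b^2$. Comparing: $\mathrm{tr} - \mathrm{tr}' = 1 - b^2 + |c|^2 - b^2 \cdots$ — actually one sees $\mathrm{tr} = 2 + b^2 + |c|^2 + |d|^2$ versus $\mathrm{tr}' = 1 + 2b^2 + |c|^2 + |d|^2$, so $\mathrm{tr} - \mathrm{tr}' = 1 - b^2$, while a short calculation gives $\det - \det' = (|c|^2 - b^2)(1 + |d|^2 - 1) = \dots$; the point is that $\mathrm{tr} = \mathrm{tr}'$ iff $b^2 = 1$ and, granting $\mathrm{tr}=\mathrm{tr}'$, one checks $\det = \det'$ iff additionally $b^2 = |c|^2$, so the two largest eigenvalues coincide exactly when $|b| = 1$ or (something forcing) the degenerate relation; in general the inequality $b \neq |c|$ and $|d| \neq 1$ forces the eigenvalues — hence the norms — to differ. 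I would then repeat the same eigenvalue computation for the second and third families of Table 1, which differ only by permuting the diagonal entries of $A_1$ or by setting the $(1,1)$ entry of $A_2$ to zero; the algebra is nearly identical and the same conclusion drops out.

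The main obstacle I anticipate is bookkeeping rather than conceptual: one must be careful about which transpose does what (transposing $A_2 = \left(\begin{smallmatrix}1 & b\\ c & 0\end{smallmatrix}\right)$ swaps $b$ and $c$ only because they sit in mirror-image positions, and $b$ is real while $c$ may be complex, so $|c|^2$ appears where $b^2$ would in the transposed version), and one must verify that the largest-eigenvalue formula really does separate the two cases for \emph{all} admissible parameter values rather than just generically — this requires checking that the quantity $(\mathrm{tr} - \mathrm{tr}')^2$ and $4(\det-\det')$ cannot conspire to make the two roots $\tfrac12(\mathrm{tr}\pm\sqrt{\mathrm{tr}^2-4\det})$ agree unless $|d|=1$ or $b=|c|$. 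Concretely, I would reduce "the larger roots agree" to a polynomial identity in $b, |c|, |d|$ and factor it; I expect the factors to be precisely $(|d|^2 - 1)$ and $(b^2 - |c|^2)$ (up to a manifestly positive cofactor coming from the discriminants), which is exactly the statement of the theorem. If a stray sign or cross term appears, a second route is available: instead of comparing largest eigenvalues, compare the full characteristic polynomials of the two $2\times 2$ matrices $M = A_1A_1^* + A_2A_2^*$ and $M' = A_1^{\rm t}(A_1^{\rm t})^* + A_2^{\rm t}(A_2^{\rm t})^*$ — they have the same spectrum iff $\mathrm{tr}\,M = \mathrm{tr}\,M'$ and $\det M = \det M'$, and that pair of equations is visibly equivalent to $b^2 = 1$ together with $b^2 = |c|^2$, contradicting the hypotheses $|d| \neq 1$, $b \neq |c|$ once one notes that when the traces differ the norms automatically differ.
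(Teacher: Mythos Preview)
Your approach is exactly the paper's: both reduce $\|P_\A^{(2)}(Z_1,Z_2)\|_{\rm op}^2$ to the largest eigenvalue of the $2\times 2$ positive matrix $M = A_1A_1^* + A_2A_2^*$, and likewise $M' = A_1^{\rm t}(A_1^{\rm t})^* + A_2^{\rm t}(A_2^{\rm t})^*$ for $\A^{\rm t}$. However, you have a computational slip in $M'$: with $A_2^{\rm t} = \left(\begin{smallmatrix}1 & c\\ b & 0\end{smallmatrix}\right)$ one gets $A_2^{\rm t}(A_2^{\rm t})^* = \left(\begin{smallmatrix}1+|c|^2 & b\\ b & b^2\end{smallmatrix}\right)$, so the $(1,1)$ entry of $M'$ is $2 + |c|^2$, not $1 + |c|^2 + b^2$. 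This matters because the correct $M$ and $M'$ have \emph{equal} traces, $\mathrm{tr}\,M = \mathrm{tr}\,M' = 2 + b^2 + |c|^2 + |d|^2$, and for two Hermitian $2\times 2$ matrices with the same trace the larger eigenvalues coincide iff the determinants coincide. One then computes $\det M - \det M' = (|c|^2 - b^2)(1 - |d|^2)$, which vanishes iff $b = |c|$ or $|d| = 1$ --- precisely the paper's one-line finish.

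Your fallback reasoning also needs repair: the claim ``when the traces differ the norms automatically differ'' is false (e.g.\ positive matrices with spectra $\{3,1\}$ and $\{3,2\}$ have the same norm but different traces), and ``norms equal'' is strictly weaker than ``full spectra equal'', so matching characteristic polynomials is neither necessary nor sufficient for equality of the largest eigenvalues. Fortunately, once the trace equality above is in hand, none of this is needed and the determinant comparison alone settles the matter.
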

\begin{proof}

We illustrate the proof for the case $A_1=\left(\begin{smallmatrix}1  & 0\\
0 & d\end{smallmatrix}\right), A_2=\left(\begin{smallmatrix}1  & b\\
c & 0\end{smallmatrix}\right)$.  The other cases can be proved similarly.

For this case
\begin{align}\|P_{\mathbf
A}^{(2)}(Z_1,Z_2)\|_{\rm op}^2&=\left\|\left(\begin{smallmatrix}(Z_1+Z_2)~~ &~~ bZ_2\\
cZ_2 & d Z_1 \end{smallmatrix}\right)\left(\begin{smallmatrix}(Z_1+Z_2)^{*}~~ &~~ \bar{c}Z_2^{*}\\
bZ_2^{*} & \bar{d}Z_1^{*}
\end{smallmatrix}\right)\right\|_{\rm op}\nonumber\\&=
\left\|\left(\begin{smallmatrix}(Z_1+Z_2)(Z_1+Z_2)^*+b^2 Z_2Z_2^{*}~~ &~~
\bar{c}(Z_1+Z_2)Z_2^{*}
+b \bar{d}Z_2Z_1^{*}\\
cZ_2(Z_1 + Z_2)^{*}+bdZ_1Z_2^{*} & |c|^2Z_2Z_2^{*}
+|d|^2Z_1Z_1^{*}
\end{smallmatrix}\right)\right\|_{\rm op}.\end{align}
 Similarly we have \begin{align}\|P_{\mathbf
A^{\rm t}}^{(2)}(Z_1,Z_2)\|_{\rm op}^2&=
\left\|\left(\begin{smallmatrix}(Z_1+Z_2)(Z_1+Z_2)^*+|c|^2 Z_2Z_2^{*}~~ &~~
b(Z_1+Z_2)Z_2^{*}
+c \bar{d}Z_2Z_1^{*}\\
bZ_2(Z_1 + Z_2)^{*}+\bar{c}dZ_1Z_2^{*} & b^2Z_2Z_2^{*}
+|d|^2Z_1Z_1^{*}
\end{smallmatrix}\right)\right\|_{\rm op}.\end{align}
Assume  $\|P_{\mathbf A}^{(2)}(Z_1,Z_2)\|_{\rm
op}^2=\|P_{\mathbf A^{\rm t}}^{(2)}(Z_1,Z_2)\|_{\rm op}^2.$ Using the form of $(Z_1,Z_2)$ this is equivalent to 
$$\left\|\left(\begin{smallmatrix}2+b^2  & \bar{c}\\
c & |c|^2+|d|^2\end{smallmatrix}\right)\right\|_{\rm op} = \left\|\left(\begin{smallmatrix}2+|c|^2  & b\\
b & b^2+|d|^2\end{smallmatrix}\right)\right\|_{\rm op}$$
i.e. $(b^2-|c|^2)(1-|d|^2)=0$ (note that the matrices on the left and right have the same trace), from which the result follows.
\end{proof}

Since $\alpha(\Omega_{\mathbf A})=1$ if and only if the two operator spaces MIN($\Omega_{\mathbf A}$) and MAX($\Omega_{\mathbf A}$) are completely isometric, it follows from the Theorem we have just proved that if $|d|\neq 1$ and $b\neq |c|,$ then $\alpha(X) > 1.$  Consequently, there exists 
a contractive homomorphism of $\mathcal O(\Omega_\A)$ into $B(\mathcal H),$ which is not completely contractive.
%

\begin{ex}(Euclidean Ball) The Euclidean ball $\mathbb{B}^2$ is characterized by $A_1=\left(\begin{smallmatrix}1  & 0\\
0 & 0\end{smallmatrix}\right), A_2=\left(\begin{smallmatrix}0  & 1\\
0 & 0\end{smallmatrix}\right)$. So, in Theorem \ref{3}, we have $|d|\neq 1$ and $b\neq |c|$.  Hence $\A$ and $\A^{\rm t}$ give rise to distinct operator space structures and, consequently, there exists a contractive homomorphism which is not completely contractive.
\end{ex}


\section{Cases not Amenable to the Operator Space Method}

Theorem \ref{3} shows that there is a contractive homomorphism which is not completely contractive for all the choices of $(A_1,A_2)$ listed in Table 1 except when $|d|=1$ or $b=|c|$.  We are, therefore, left with the following families of $(A_1,A_2)$ to be considered:

\begin{center}

\begin{table}[ht]\label{Table3}
\caption{Cases not covered by the operator space approach}
\begin{tabular}{ |c|c|c| }
\hline
&$A_1$ & $A_2$ \\ \hline \hline

(i)&{$\begin{array}{c}
\\
\left(\begin{array}{cc}
1 &  0 \\
0 & e^{i\theta} \\
\end{array}\right)\\
\\
\end{array} \theta\in \mathbb R$}  & $\begin{array}{c}\\
\left(\begin{array}{cc}
1 &  b \\
c & 0 
\end{array}\right)c\in \mathbb C,b\in\mathbb R_+
\\
\\
\end{array}$  \\ \hline

(ii)&{$\begin{array}{c}
\\
\left(\begin{array}{cc}
1 &  0 \\
0 & e^{i\theta} \\
\end{array}\right)\\
\\
\end{array} \theta\in \mathbb R$}  & $\begin{array}{c}\\
\left(\begin{array}{cc}
0 &  b \\
c & 0 
\end{array}\right)c\in \mathbb C,b\in\mathbb R_+
\\
\\
\end{array}$  \\ \hline

(iii)&{$\begin{array}{c}
\\
\left(\begin{array}{cc}
e^{i\theta} &  0 \\
0 & 1 \\
\end{array}\right)\\
\\
\end{array} \theta\in \mathbb R$} 
 & $\begin{array}{c} 
 \\
 \left(\begin{array}{cc}
1 &  b \\
c & 0 \\
\end{array}\right)c\in \mathbb C,b\in \mathbb R_+
\\
\\
\end{array}$ \\  \hline

(iv)&{$\begin{array}{c}
\\
\left(\begin{array}{cc}
1 &  0 \\
0 & d \\
\end{array}\right)\\
\\
\end{array} d\in \mathbb C$} 
 & $\begin{array}{c} 
 \\
 \left(\begin{array}{cc}
1 &  |c| \\
c & 0 \\
\end{array}\right)c\in \mathbb C
\\
\\
\end{array}$ \\  \hline

(v)&{$\begin{array}{c}
\\
\left(\begin{array}{cc}
1 &  0 \\
0 & d \\
\end{array}\right)\\
\\
\end{array} d\in \mathbb C$}  & $\begin{array}{c}\\
\left(\begin{array}{cc}
0 &  |c| \\
c & 0 
\end{array}\right)c\in \mathbb C
\\
\\
\end{array}$  \\ \hline

(vi)&{$\begin{array}{c}
\\
\left(\begin{array}{cc}
d &  0 \\
0 & 1 \\
\end{array}\right)\\
\\
\end{array} d\in \mathbb C$} 
 & $\begin{array}{c} 
 \\
 \left(\begin{array}{cc}
1 &  |c| \\
c & 0 \\
\end{array}\right)c\in \mathbb C
\\
\\
\end{array}$ \\  \hline

\end{tabular}
\end{table}
\end{center}

These six families are not disjoint and have been classified as such on the basis of the method of proof used.

\subsection{Dual norm method}

We first consider a special case of type (ii) in Table 3 with $A_1=\left(\begin{smallmatrix}1  & 0\\
0 & 1\end{smallmatrix}\right), A_2=\left(\begin{smallmatrix}0  & 1\\
0 & 0\end{smallmatrix}\right)$.  Although this case is covered by the more general method to be outlined later we present an alternate, interesting procedure for this example since it is possible to explicitly calculate the dual norm $\|\cdot\|_{\A}^*$ in this case. Equipped with the information about the dual norm we can directly construct a pair $\V=(V_1,V_2)$ such that $\|L_{\V}\| \leq 1$ and $\|L_{\V}^{(2)}(P_{\A})\|>1$.

Note that in this case
$$\|(z_1,z_2)\|_{\A} = \frac{|z_2|+\sqrt{|z_2|^2 + 4|z_1|^2}}{2}$$
and the unit ball 
$$\Omega_\A = \{(z_1,z_2):|z_1|^2 + |z_2|<1\}.$$

\begin{lem}\label{4}
Let $A_1=\left(\begin{smallmatrix}1  & 0\\
0 & 1\end{smallmatrix}\right), A_2=\left(\begin{smallmatrix}0  & 1\\
0 & 0\end{smallmatrix}\right)$. If $(\omega_1,\omega_2)\in (\mathbb C^2,\|\cdot\|_{\A}^*)$ then the dual norm
\[ \|(\omega_1,\omega_2)\|_{\A}^* = \left\{ \begin{array}{ll}
        \frac{|\omega_1|^2+4|\omega_2|^2}{4|\omega_2|}  & \mbox{if $|\omega_2|\geq \frac{|\omega_1|}{2} $};\\
        |\omega_1| & \mbox{if $|\omega_2|\leq \frac{|\omega_1|}{2}$}.\end{array} \right. \] 
\end{lem}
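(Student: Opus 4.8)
The strategy is to compute the dual norm directly from its variational definition
$$
\|(\omega_1,\omega_2)\|_{\A}^* = \sup\{|\omega_1 z_1 + \omega_2 z_2| : (z_1,z_2)\in\Omega_\A\},
$$
using the explicit description of $\Omega_\A$ recorded just before the lemma, namely $\Omega_\A = \{(z_1,z_2): |z_1|^2 + |z_2| < 1\}$. First I would reduce to the case where $\omega_1,\omega_2 \geq 0$: replacing $z_j$ by a unimodular multiple does not change membership in $\Omega_\A$, so the supremum is attained on the boundary with $z_1,z_2\geq 0$ and $z_1^2 + z_2 = 1$. Setting $z_2 = t \in [0,1]$ and $z_1 = \sqrt{1-t}$, the problem becomes the one-variable maximization
$$
\|(\omega_1,\omega_2)\|_{\A}^* = \max_{0\le t\le 1}\; \big(\omega_1\sqrt{1-t} + \omega_2 t\big).
$$

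Next I would carry out this elementary calculus problem. Differentiating $g(t) = \omega_1\sqrt{1-t} + \omega_2 t$ gives $g'(t) = -\tfrac{\omega_1}{2\sqrt{1-t}} + \omega_2$, which vanishes at the interior critical point $t_* = 1 - \tfrac{\omega_1^2}{4\omega_2^2}$, provided $t_* \in (0,1)$, i.e. provided $|\omega_1| < 2|\omega_2|$. Since $g$ is concave (its second derivative is negative), this critical point is the global maximum when it lies in $[0,1]$; substituting $t_*$ back yields $g(t_*) = \tfrac{\omega_1^2}{2\omega_2} + \omega_2 - \tfrac{\omega_1^2}{4\omega_2} = \tfrac{\omega_1^2 + 4\omega_2^2}{4\omega_2}$, which is the first branch once we reinstate absolute values. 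When $|\omega_1| \ge 2|\omega_2|$, the critical point $t_*$ is $\le 0$, so $g$ is decreasing on $[0,1]$ and the maximum is at $t=0$, giving $g(0) = \omega_1$, i.e. the second branch $|\omega_1|$. One should also check the boundary endpoint value $g(1) = \omega_2$ is never larger in the relevant regime, which is immediate from concavity.

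I do not anticipate a genuine obstacle here — the argument is a routine boundary maximization. The one point requiring a little care is the initial reduction: one must justify that the supremum of $|\omega_1 z_1 + \omega_2 z_2|$ over $\Omega_\A$ equals the supremum over the real slice $\{z_1,z_2\ge 0,\ z_1^2+z_2 = 1\}$. This follows by first choosing the phases of $z_1,z_2$ to align with those of $\bar\omega_1,\bar\omega_2$ (legitimate since $\Omega_\A$ is invariant under independent rotations of the coordinates), and then noting that scaling $(z_1,z_2)$ radially outward only increases the modulus of the linear functional, so the sup is attained on the boundary $|z_1|^2 + |z_2| = 1$; the fact that the defining inequality is strict is harmless since we take a supremum. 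It is also worth verifying that the two branches agree on the interface $|\omega_2| = |\omega_1|/2$, where both give $|\omega_1|$, confirming continuity of the formula.
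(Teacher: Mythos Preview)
Your proposal is correct and follows essentially the same approach as the paper: both reduce to a one-variable maximization over the boundary $|z_1|^2+|z_2|=1$ after aligning phases, and then split into the two cases according to whether the critical point lies in the interval. The only cosmetic difference is that the paper parametrizes by $|z_1|$ (yielding the quadratic $|\omega_1||z_1|+|\omega_2|(1-|z_1|^2)$) whereas you parametrize by $t=|z_2|$ (yielding $\omega_1\sqrt{1-t}+\omega_2 t$); these are the same optimization under the substitution $|z_1|=\sqrt{1-t}$.
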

\begin{proof}
Let $f_{\omega_1,\omega_2}$ be the linear functional on $(\mathbb C^2, \|\cdot \|_\A)$ defined by 
$$f_{\omega_1,\omega_2}(z_1,z_2) = \omega_1 z_1 +\omega_2 z_2.$$
Then
\begin{align*}
\|(\omega_1,\omega_2)\|_{\A}^* &= \sup_{(z_1,z_2)\in \Omega_\A}|f_{\omega_1,\omega_2}(z_1,z_2)|\\
&=\sup_{|z_2| \leq 1 - |z_1|^2}|\omega_1 z_1 +\omega_2 z_2|\\
&=\sup_{|z_2| \leq 1 - |z_1|^2}(|\omega_1||z_1|+|\omega_2||z_2|)\\
&=\sup_{|z_1| \leq 1}\big(|\omega_1||z_1|+|\omega_2|(1-|z_1|^2)\big).
\end{align*}

If $|\omega_2|\geq \frac{|\omega_1|}{2} $ the expression on the right attains its maximum at $|z_1| = \frac{|\omega_1|}{2|\omega_2|} \leq 1$ and the maximum value is $\frac{|\omega_1|^2+4|\omega_2|^2}{4|\omega_2|}$.

If $|\omega_2|\leq \frac{|\omega_1|}{2} $ the expression on the right is monotonic in $|z_1|$ and the maximum is attained at $|z_1|=1$.  The maximum value in this case is $|\omega_1|$.

\end{proof}

\begin{thm}\label{5}
Let  $A_1=\left(\begin{smallmatrix}1  & 0\\
0 & 1\end{smallmatrix}\right), A_2=\left(\begin{smallmatrix}0  & 1\\
0 & 0\end{smallmatrix}\right)$ and 
 $V_1=\left(\begin{matrix}\frac{1}{\sqrt{2}}  & 0 \end{matrix}\right), V_2=\left(\begin{matrix}0  & 1\end{matrix}\right).$
  Then 
\begin{align*}
&(i)~~\|L_{\V}\|_{(\mathbb C^2,\|\cdot\|_{\A}^*)\rightarrow(\mathbb C^2, \|\cdot\|_2)}=1\\
&(ii)~~\|L_{\V}^{(2)}(P_\A)\|=\sqrt{\frac{3}{2}}.
\end{align*}
Consequently $\rho_\V$, for this choice of $\V = (V_1,V_2)$, is contractive on $\mathcal O(\Omega_\A)$ but not completely contractive.
\end{thm}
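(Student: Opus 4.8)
The plan is to verify parts (i) and (ii) by direct computation, and then invoke Lemma~\ref{lemma1} (or rather the equivalences established in Section~2) to conclude. For part (i), I would use the explicit description of the dual norm $\|\cdot\|_\A^*$ from Lemma~\ref{4} together with Proposition~\ref{14}. Indeed, since $V_1=\left(\begin{smallmatrix}\frac{1}{\sqrt 2}&0\end{smallmatrix}\right)$ and $V_2=\left(\begin{smallmatrix}0&1\end{smallmatrix}\right)$, the matrix representation of $L_\V^*$ is $\left(\begin{smallmatrix}\frac{1}{\sqrt 2}&0\\0&1\end{smallmatrix}\right)$, and by Proposition~\ref{14} contractivity of $\rho_\V$ (equivalently of $L_\V$) amounts to checking that
\[
\sup_{|z_1|^2+|z_2|^2\le 1}\Big\|z_1\tfrac{1}{\sqrt 2}A_1+z_2A_2\Big\|_{\rm op}^2\le 1,
\]
where here $B_1=\frac{1}{\sqrt 2}A_1$ and $B_2=A_2$. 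With $A_1=I$ and $A_2=\left(\begin{smallmatrix}0&1\\0&0\end{smallmatrix}\right)$ this is $\sup_{|z_1|^2+|z_2|^2\le 1}\big\|\left(\begin{smallmatrix}z_1/\sqrt 2&z_2\\0&z_1/\sqrt 2\end{smallmatrix}\right)\big\|_{\rm op}^2\le 1$, an elementary $2\times 2$ operator norm estimate; I expect equality to be attained, giving $\|L_\V\|=1$. Alternatively one can run the particular case of Proposition~\ref{14} with $u=\frac{1}{\sqrt 2}$, $v=1$: conditions (i) and (ii) there reduce to $\frac12\le 1$ and an inequality $\inf_{\|\beta\|=1}\{1-\frac12\|A_1^*\beta\|^2-\|A_2^*\beta\|^2+\cdots\}\ge 0$ which one checks directly using $A_1^*=I$, $A_2^*=\left(\begin{smallmatrix}0&0\\1&0\end{smallmatrix}\right)$.

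For part (ii), I would apply Proposition~\ref{15}: $\|\rho_\V^{(2)}(P_\A)\|=\|\rho_\V^{(n)}(P_\A)\|$ (here $n=2$) equals the operator norm of the $2\times 4$ matrix $\left(\begin{matrix}B_1&B_2\end{matrix}\right)$, where again $B_1=\frac{1}{\sqrt 2}A_1=\frac{1}{\sqrt 2}I$ and $B_2=A_2=\left(\begin{smallmatrix}0&1\\0&0\end{smallmatrix}\right)$. Thus $\left(\begin{matrix}B_1&B_2\end{matrix}\right)=\left(\begin{smallmatrix}\frac{1}{\sqrt 2}&0&0&1\\0&\frac{1}{\sqrt 2}&0&0\end{smallmatrix}\right)$, whose largest singular value squared is the largest eigenvalue of $B_1B_1^*+B_2B_2^*=\frac12 I+\left(\begin{smallmatrix}1&0\\0&0\end{smallmatrix}\right)=\left(\begin{smallmatrix}3/2&0\\0&1/2\end{smallmatrix}\right)$, namely $\frac32$. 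Hence $\|\rho_\V^{(2)}(P_\A)\|=\sqrt{3/2}$. (Equivalently, one may invoke the specialization after Proposition~\ref{15}, noting $\rho_\V^{(2)}(P_\A)$ here is not quite in the $V_1=(u\ 0)$, $V_2=(0\ v)$ form because $A_1\ne\left(\begin{smallmatrix}1&0\\0&0\end{smallmatrix}\right)$, so the direct singular-value computation is cleanest.)

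Finally, combining the two parts: $\|L_\V\|\le 1$ gives, via the equivalence $\|\rho_\V\|\le 1\iff\|L_\V\|\le 1$ established in Section~2.1, that $\rho_\V$ is contractive on $\mathcal O(\Omega_\A)$; while $\|\rho_\V^{(2)}(P_\A)\|=\sqrt{3/2}>1$ together with $\|P_\A\|_\infty=1$ shows $\rho_\V$ is not $2$-contractive, hence not completely contractive. I do not anticipate a genuine obstacle here; the only mildly delicate point is part (i), where one must confirm that the supremum defining $\|L_\V\|$ does not exceed $1$ — the natural candidate extremizer is $z_1=0,\ z_2=1$ (or $\beta=e_1$ in the Proposition~\ref{14} formulation), and one should check that pushing mass into the $z_1$ direction, which both enlarges the diagonal entries of $z_1 B_1/\sqrt2$ and interacts with the off-diagonal $z_2$ entry, still keeps the norm bounded by $1$; this is exactly where the scaling $1/\sqrt2$ on $V_1$ was chosen to make the estimate tight.
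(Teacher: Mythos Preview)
Your proof is correct. Part~(ii) is identical to the paper's computation. For part~(i), however, you take a genuinely different route: the paper works directly with the explicit dual norm from Lemma~\ref{4}, computing
\[
\|L_\V\|^2=\sup_{\|(\omega_1,\omega_2)\|_\A^*=1}\Big(\tfrac{|\omega_1|^2}{2}+|\omega_2|^2\Big)
\]
by splitting into the two cases $|\omega_2|\gtrless\tfrac{|\omega_1|}{2}$ that arise in Lemma~\ref{4}. You instead pass to the adjoint via Proposition~\ref{14}, reducing to the operator-norm estimate $\sup_{|z_1|^2+|z_2|^2\le 1}\big\|\left(\begin{smallmatrix}z_1/\sqrt 2&z_2\\0&z_1/\sqrt 2\end{smallmatrix}\right)\big\|_{\rm op}\le 1$, which follows from the elementary bound $\tfrac{1}{2}\big(s+\sqrt{s^2-t^2}\,\big)\le s$ with $s=|z_1|^2+|z_2|^2$, $t=|z_1|^2$ (equality at $z_1=0$, $|z_2|=1$). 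Your alternative via the specialized conditions after Proposition~\ref{14} is equally clean: with $u=\tfrac{1}{\sqrt2}$, $v=1$, condition~(ii) there becomes $\tfrac12(|\beta_1|^2-1)^2\ge 0$. Either way you avoid the case analysis of Lemma~\ref{4}; on the other hand, the paper's route is the one that actually exploits the dual-norm computation, which is the advertised purpose of that subsection.
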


\begin{proof}[Proof of (i)]

\begin{align*}
\|L_{\V}\|_{(\mathbb C^2,\|\cdot\|_{\A}^*)\rightarrow(\mathbb C^2, \|\cdot\|_2)}^2 &= \sup_{\|(\omega_1,\omega_2)\|_{\A}^*=1}\|\omega_1 V_1 + \omega_2 V_2\|_2^2\\
&=\sup_{\|(\omega_1,\omega_2)\|_{\A}^*=1}\Big(\frac{|\omega_1|^2}{2} + |\omega_2 |^2\Big).
\end{align*}

We now consider two cases:

Case (a): $|\omega_2|\geq \frac{|\omega_1|}{2} $ and $1=\|(\omega_1,\omega_2)\|_{\A}^*=\frac{|\omega_1|^2+4|\omega_2|^2}{4|\omega_2|}~~$ from Lemma \ref{4}.

These two constraints together can be seen to be equivalent to the constraints $\frac{1}{2}\leq |\omega_2| \leq 1$ and $|\omega_1|^2=4|\omega_2|(1-|\omega_2|)$.

Hence the supremum above for this range of $(\omega_1,\omega_2)$ is given by 
$$\sup_{\frac{1}{2}\leq |\omega_2| \leq 1}|\omega_2|\big(2-|\omega_2|\big)=1.$$

Case (b): $|\omega_2|\leq \frac{|\omega_1|}{2} $ and $1=\|(\omega_1,\omega_2)\|_{\A}^*=|\omega_1|~~$ from Lemma \ref{4}.

The supremum for this range of $(\omega_1,\omega_2)$ is given by 
$$\sup_{|\omega_2| \leq \frac{1}{2}}\big(\frac{1}{2}+|\omega_2|^2\big)=\frac{3}{4}.$$

Taking the larger of the supremums in Case (a) and Case (b) we get that $\|L_\V\|=1$.
\end{proof}
\begin{proof}[Proof of (ii)]
\begin{align*}
\|L_\V^{(2)}(P_\A)\|^2&=\|A_1\otimes V_1 + A_2 \otimes V_2\|^2\\
&=\Big\|\left( \begin{matrix}\frac{1}{\sqrt{2}}A_1 & A_2 \end{matrix}\right)\Big\|^2\\
&=\Bigg\|\left( \begin{matrix}\frac{1}{\sqrt{2}}A_1 & A_2 \end{matrix}\right)\left(\begin{matrix}\frac{1}{\sqrt{2}}A_1^*\\ \\ A_2^*\end{matrix}\right)\Bigg\|\\
&=\bigg\|\left(\begin{matrix}\frac{3}{2} & 0\\
0 & \frac{1}{2}\end{matrix}\right)\bigg\|~~\mbox{using the form of}~~ A_1,A_2\\
&=\frac{3}{2}.
\end{align*}
\end{proof}
\vspace{-0.175in}
\subsection{General cases not amenable to the operator space method}

The various families of $(A_1,A_2)$ listed in Table 3 require a case by case analysis to show that there is a contractive homomorpism which is not completely contractive.  We first present a general outline of the method used.  

We choose the pair $\V=(V_1,V_2)$ to be of the form $V_1=\left(\begin{matrix}u&0 \end{matrix}\right), V_2=\left(\begin{matrix}0&v \end{matrix}\right),~~u,v\in \mathbb R_+$.  $L_\V:(\mathbb C^2,\|\cdot\|_{\A}^*) \rightarrow (\mathbb C^2, \|\cdot\|_2)$ then becomes the linear map $(z_1,z_2) \mapsto (z_1u,z_2v)$.

We show, in each case, that by a suitable choice of $u$ and $v$ we can ensure that $L_\V$ is contractive while $\|L_{\V}^{(2)}(P_\A)\|>1$ although $\|P_\A\|=1$ by definition.

We list the contractivity conditions (see Propositions \ref{14} and \ref{15} for details).

(a) $L_\V$ is contractive if and only if the following two conditions are satisfied:
\begin{enumerate}
\item[(i)] $u\leq \frac{1}{\|A_1^{*}\|}$ or $v\leq \frac{1}{\|A_2^{*}\|}$ and
\item[(ii)] 
\begin{equation}\label{6}
\inf_{\beta\in\mathbb C^2,\|\beta\|=1}\Big\{1-u^2\|A_{1}^*\beta\|^2
-v^2\|A_{2}^*\beta\|^2+
u^2v^2\Big(\|A_{1}^*\beta\|^2\|A_{2}^*\beta\|^2 - |\left\langle
A_1A_{2}^*\beta, \beta\right\rangle |^2\Big)\Big\}\geq 0.
\end{equation}

\end{enumerate}

(b) $\|L_{\V}^{(2)}(P_\A)\| \leq1$ if and only if

\begin{equation}\label{7}
\inf_{\beta\in\mathbb C^2,\|\beta\|=1}\Big\{1-u^2\|A_{1}^*\beta\|^2
-v^2\|A_{2}^*\beta\|^2\Big\}\geq 0.
\end{equation}

Note that the term in parenthesis in (\ref{6}) is non-negative by the Schwarz inequality and that the expression (\ref{7}) is the same as the first three terms in (\ref{6}). 

We show that, in each case, we can choose $(u, v)$ such that the infimum in (\ref{6}) is exactly 0.  Also that this infimum is attained at $\beta = \beta_0$ such that the term in parenthesis in (\ref{6}) is positive (that is, the Schwarz inequality referred to above is a strict inequality at $\beta_0$).  It then follows that the expression in braces in (\ref{7}) is negative when $\beta = \beta_0$ and, consequently, the infimum in (\ref{7}) is negative.  Taken together it follows that $L_\V$ (and consequently $\rho_\V$) is contractive but $\|L_\V^{(2)}(P_\A)\| > 1$ and, as a result, $\rho_\V^{(2)}$ is not contractive.


Let $\eta^{(i)}, i=1,2,$ be the vectors such that $A_1^*\eta^{(i)}$ and $A_2^* \eta^{(i)}$ are linearly dependent.  That is, the term in parenthesis in (\ref{6}) vanishes when $\beta = \eta^{(i)}$.

We now provide the details of the argument which proceeds in two steps.

{\bf Step 1:} Show that there are certain ranges of the parameters $(u,v)$ such that the infimum in (\ref{6}) is not attained at $\eta^{(1)}$ or $\eta^{(2)}$ for those values of $(u,v)$.

Let
$$g_{u,v}(\beta)=1-u^2\|A_{1}^*\beta\|^2
-v^2\|A_{2}^*\beta\|^2+
u^2v^2\Big(\|A_{1}^*\beta\|^2\|A_{2}^*\beta\|^2 - |\left\langle
A_1A_{2}^*\beta, \beta\right\rangle |^2\Big).$$

We need to show that there exists $\beta$ such that 
\begin{equation*}\label{1A}
g_{u,v}(\beta) < g_{u,v}(\eta^{(i)}),~~i=1,2,
\end{equation*}
when $(u,v)$ take values in a range of interest.  That is, 
we need to find $\beta$ such that 
\begin{equation}\label{3A}
g_{u,v}(\eta^{(i)})- g_{u,v}(\beta)=
a_i(\beta)u^2+b_i(\beta)v^2-c(\beta)u^2v^2 > 0.
\end{equation}
Here
\begin{eqnarray} 
a_i(\beta)&=&\|A_1^*\beta\|^2-\|A_1^*\eta^{(i)}\|^2 \label{eq3}\\
b_i(\beta)&=&\|A_2^*\beta\|^2-\|A_2^*\eta^{(i)}\|^2 \nonumber\\
c(\beta)&=&\|A_{1}^*\beta\|^2\|A_{2}^*\beta\|^2 - |\left\langle
A_1A_{2}^*\beta, \beta\right\rangle |^2 \geq 0. \nonumber
\end{eqnarray}

Consider the functions 
$$f_i(u,v, \beta)=a_i(\beta)u^2+b_i(\beta)v^2-c(\beta)u^2v^2 ~~{\rm with}~~ c(\beta)\geq 0,~~ i=1,2.$$

The following result is evident from the nature of the functions $f_i(u,v, \beta)$.

\begin{lem}\label{18}
{\rm (i)} Assume $a_i(\beta) > 0$ for some fixed $\beta$ and $i=1,2$.  Then, given any $u_0>0$, there exists $v_0>0$ (depending on $u_0$) such that $f_i(u,v, \beta)>0$ in the region $u<u_0, v<\frac{v_0}{u_0} u,$  that is, inside the triangle with vertices $(0,0), (u_0,0)$ and $(u_0,v_0)$.
\begin{enumerate}
\item [(ii)] Assume $b_i(\beta) > 0$ for some fixed $\beta$ and $i=1,2$.  Then, given any $v_0>0$, there exists $u_0>0$ (depending on $v_0$) such that $f_i(u,v, \beta)>0$ in the region $v<v_0, u<\frac{u_0}{v_0} v$ , that is, inside the triangle with vertices $(0,0), (0,v_0)$ and $(u_0,v_0)$.
\item[]
\item [(iii)] If $f_i(u_0,v_0,\beta)>0$ then $f_i(t u_0,tv_0,\beta)>0$ for $0<t<1$.
\end{enumerate}
\end{lem}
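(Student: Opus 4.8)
The plan is to treat each of the three parts of Lemma~\ref{18} as an elementary observation about the quadratic-in-$u^2,v^2$ shape of the function $f_i(u,v,\beta) = a_i(\beta)u^2 + b_i(\beta)v^2 - c(\beta)u^2 v^2$ for a \emph{fixed} $\beta$, so that $a_i(\beta), b_i(\beta), c(\beta)$ are just nonnegative constants (the nonnegativity of $c(\beta)$ being the Schwarz inequality already recorded in \eqref{eq3}). For part (iii), I would simply substitute $(tu_0, tv_0)$ and factor out $t^2$: one gets $f_i(tu_0,tv_0,\beta) = t^2\bigl(a_i u_0^2 + b_i v_0^2 - t^2 c\, u_0^2 v_0^2\bigr)$, and since $0<t<1$ the bracketed quantity only increases relative to its value at $t=1$ when $c\ge 0$, hence it stays positive; this is the cleanest step and I would dispatch it first or last in a single line.

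For part (i), fix $\beta$ with $a_i(\beta) > 0$ and fix $u_0 > 0$. On the triangle with vertices $(0,0),(u_0,0),(u_0,v_0)$ we have $u \le u_0$ and $v \le \tfrac{v_0}{u_0}u$, so $v^2 \le \tfrac{v_0^2}{u_0^2}u^2$ and therefore
\begin{align*}
f_i(u,v,\beta) &= a_i(\beta)u^2 + v^2\bigl(b_i(\beta) - c(\beta)u^2\bigr)\\
&\ge a_i(\beta)u^2 - c(\beta)u^2 v^2 \ge a_i(\beta)u^2 - c(\beta)u^2\cdot\tfrac{v_0^2}{u_0^2}u^2 = u^2\Bigl(a_i(\beta) - c(\beta)\tfrac{v_0^2}{u_0^2}u^2\Bigr),
\end{align*}
where I used $b_i(\beta) \ge 0$ (which holds because $b_i(\beta) = \|A_2^*\beta\|^2 - \|A_2^*\eta^{(i)}\|^2$ and $\eta^{(i)}$ is, by construction, a minimizer direction — or, if that is not guaranteed in general, one can instead just bound $v^2(b_i - cu^2) \ge -|b_i|u_0^2 - c u^2 v^2$ and choose $v_0$ smaller still). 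Since $u \le u_0$, it suffices to pick $v_0$ small enough that $c(\beta)\tfrac{v_0^2}{u_0^2}u_0^2 = c(\beta)v_0^2 < a_i(\beta)$, i.e.\ any $v_0 < \sqrt{a_i(\beta)/c(\beta)}$ works (and if $c(\beta)=0$ any $v_0$ works). Part (ii) is the mirror image: exchange the roles of $u$ and $v$, of $a_i$ and $b_i$, and of $u_0$ and $v_0$, and repeat verbatim.

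The only point requiring care — and the one I would flag as the ``main obstacle,'' though it is minor — is the sign of the \emph{other} linear coefficient ($b_i(\beta)$ in part (i), $a_i(\beta)$ in part (ii)) on the triangle. If the paper's setup guarantees $b_i(\beta)\ge 0$ there (which is plausible since $\eta^{(i)}$ is chosen as an extremal direction for the parenthetical Schwarz term, not necessarily for $\|A_2^*\cdot\|$), the estimate above is immediate; otherwise one absorbs the possibly-negative term $b_i(\beta)v^2$ by shrinking $v_0$ further, using that on the triangle $b_i(\beta)v^2 \ge b_i(\beta)\tfrac{v_0^2}{u_0^2}u^2$ when $b_i(\beta)<0$ is false in the wrong direction — so instead one writes $f_i \ge (a_i(\beta) - |b_i(\beta)|\tfrac{v_0^2}{u_0^2} - c(\beta)\tfrac{v_0^2}{u_0^2}u_0^2)u^2$ and chooses $v_0$ so small that the coefficient is positive. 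Either way the conclusion holds, and I would present the argument with the clean hypothesis and remark that shrinking $v_0$ handles the general case. In all three parts the proof is a direct inequality chase with no hidden analytic input beyond $c(\beta)\ge 0$, so I expect this to be a short lemma whose proof is essentially ``by inspection,'' as the paper itself says.
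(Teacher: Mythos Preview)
Your proposal is correct. The paper gives no proof at all, merely stating that the lemma ``is evident from the nature of the functions $f_i(u,v,\beta)$,'' so your argument in fact supplies the details the authors omit. The approach --- factoring out $u^2$ (respectively $v^2$), using $v^2 \le \tfrac{v_0^2}{u_0^2}u^2$ on the triangle, and shrinking $v_0$ --- is exactly the elementary inequality chase one would expect.

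Two small clarifications. First, your tentative claim that $b_i(\beta)\ge 0$ because ``$\eta^{(i)}$ is a minimizer direction'' is not supported by the paper: $\eta^{(i)}$ is defined as a unit vector for which $A_1^*\eta^{(i)}$ and $A_2^*\eta^{(i)}$ are linearly \emph{dependent} (i.e.\ where the Schwarz term $c(\cdot)$ vanishes), not as a minimizer of $\|A_2^*\cdot\|$. So $b_i(\beta)$ can certainly be negative, and you are right to fall back to the general bound. Second, your aside that ``$b_i(\beta)v^2 \ge b_i(\beta)\tfrac{v_0^2}{u_0^2}u^2$ when $b_i(\beta)<0$ is false in the wrong direction'' is itself mistaken: multiplying $v^2 \le \tfrac{v_0^2}{u_0^2}u^2$ by the negative number $b_i(\beta)$ reverses the inequality, giving precisely $b_i(\beta)v^2 \ge b_i(\beta)\tfrac{v_0^2}{u_0^2}u^2$, which is what you need. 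Combined with $-c(\beta)u^2v^2 \ge -c(\beta)v_0^2\,u^2$ (since $u\le u_0$ and $v^2 \le \tfrac{v_0^2}{u_0^2}u^2$), one obtains directly
\[
f_i(u,v,\beta) \;\ge\; u^2\Bigl(a_i(\beta) - |b_i(\beta)|\tfrac{v_0^2}{u_0^2} - c(\beta)v_0^2\Bigr),
\]
positive for $v_0$ small enough, with no case distinction on the sign of $b_i(\beta)$ needed. Part (iii) is exactly as you say.
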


We will show below that, in each of the six cases in Table 3, it is possible to ensure the positivity of $a_i(\beta),~i=1,2$ or $b_i(\beta),~i=1,2$ for some choice of $\beta$.  Consequently, it will follow that the inequality (\ref{1A}) will be true for that vector $\beta$ with $(u,v)$ in the region characterized in Lemma \ref{18} above.  Hence, for $(u,v)$ in this range, the infimum is not attained at $\eta^{(i)}, i=1,2$.

Consider first the cases (i), (ii) and (iii).

\begin{center}
\begin{tabular}{ |c|c|c| }
\hline
&$A_1$ & $A_2$ \\ \hline \hline
(i)&{$\begin{array}{c}
\\
\left(\begin{array}{cc}
1 &  0 \\
0 & e^{i\theta} \\
\end{array}\right)\\
\\
\end{array} \theta\in \mathbb R$}  & $\begin{array}{c}\\
\left(\begin{array}{cc}
1 &  b \\
c & 0 
\end{array}\right)c\in \mathbb C,b\in\mathbb R_+
\\
\\
\end{array}$  \\ \hline

(ii)&{$\begin{array}{c}
\\
\left(\begin{array}{cc}
1 &  0 \\
0 & e^{i\theta} \\
\end{array}\right)\\
\\
\end{array} \theta\in \mathbb R$}  & $\begin{array}{c}\\
\left(\begin{array}{cc}
0 &  b \\
c & 0 
\end{array}\right)c\in \mathbb C,b\in\mathbb R_+
\\
\\
\end{array}$  \\ \hline

(iii)&{$\begin{array}{c}
\\
\left(\begin{array}{cc}
e^{i\theta} &  0 \\
0 & 1 \\
\end{array}\right)\\
\\
\end{array} \theta\in \mathbb R$} 
 & $\begin{array}{c} 
 \\
 \left(\begin{array}{cc}
1 &  b \\
c & 0 \\
\end{array}\right)c\in \mathbb C,b\in \mathbb R_+
\\
\\
\end{array}$ \\  \hline

\end{tabular}
\end{center}

We use the unitary equivalence described in Section \ref{unit-equiv}. In cases (i) and (ii) multiply $A_1$ and $A_2$ on the left by the unitary matrix $\left(\begin{smallmatrix}1  & 0\\
0 & e^{-i\theta}\end{smallmatrix}\right)$ so that $A_1$ becomes the identity matrix. In case (iii) multiply $A_1$ and $A_2$ on the left by the unitary matrix $\left(\begin{smallmatrix}e^{-i\theta}  & 0\\
0 & 1\end{smallmatrix}\right)$ so that $A_1$ becomes the identity matrix.

Now conjugate $A_1$ and $A_2$ by the unitary which makes $A_2$ upper triangular so that cases (i),(ii) and (iii) reduce to the situation
$$A_1 = \left(\begin{smallmatrix}1  & 0\\
0 & 1\end{smallmatrix}\right) ~{\rm and }~ A_2=\left(\begin{smallmatrix}\mu  & \sigma\\
0 & \nu\end{smallmatrix}\right) ~{\rm with}~ |\mu|\geq |\nu|, \sigma\neq 0$$
In this case $a_i(\beta)=0$ for all $\beta$ but it is possible to choose $\beta$ such that $b_i(\beta)>0$.

$\eta^{(i)}$ satisfies the equation $(A_2^* - \lambda_i A_1^*)\eta^{(i)} =0$.  So in this case $\eta^{(i)}$ is a (unit) eigenvector of $A_2^*$ with eigenvalue $\lambda_i$.  Since the eigenvalues of $A_2^*$ are $\bar{\mu}$ and $\bar{\nu}$ it follows that $\|A_2^*\eta^{(i)}\|^2=|\mu|^2 ~{\rm or}~ |\nu|^2$.  Hence we can take $\beta=\left(\begin{smallmatrix}1\\0\end{smallmatrix}\right)$ so that $b_i(\beta)\geq|\sigma|^2>0$.


Now consider cases (iv) and (v).
\begin{center}
\begin{tabular}{ |c|c|c| }
\hline
&$A_1$ & $A_2$ \\ \hline \hline
(iv)&{$\begin{array}{c}
\\
\left(\begin{array}{cc}
1 &  0 \\
0 & d \\
\end{array}\right)\\
\\
\end{array} d\in \mathbb C, |d|\neq 1$} 
 & $\begin{array}{c} 
 \\
 \left(\begin{array}{cc}
1 &  |c| \\
c & 0 \\
\end{array}\right)c\in \mathbb C
\\
\\
\end{array}$ \\  \hline

(v)&{$\begin{array}{c}
\\
\left(\begin{array}{cc}
1 &  0 \\
0 & d \\
\end{array}\right)\\
\\
\end{array} d\in \mathbb C, |d|\neq 1$}  & $\begin{array}{c}\\
\left(\begin{array}{cc}
0 &  |c| \\
c & 0 
\end{array}\right)c\in \mathbb C
\\
\\
\end{array}$  \\ \hline

\end{tabular}
\end{center}

In cases (iv) and (v) we have, in Equation (\ref{eq3}),
\begin{eqnarray*}
a_i(\beta)&=&|\beta_1|^2 + |d|^2 |\beta_2|^2 -|\eta^{(i)}_1|^2 - |d|^2|\eta^{(i)}_2|^2\\
&=&(|\beta_1|^2-|\eta^{(i)}_1|^2) + |d|^2(|\beta_2|^2-|\eta^{(i)}_2|^2)\\
&=&(1-|d|^2)(|\eta^{(i)}_2|^2-|\beta_2|^2).
\end{eqnarray*}
If $|\eta^{(i)}_2| = 0 ~{\rm or}~1$ then $c=0$ and it reduces to the simultaneously diagonalizable case.  If $|\eta^{(i)}_2| \neq 0,1$ we can choose $\beta$ such that $|\beta_2| < |\eta^{(i)}_2|$ (resp. $|\beta_2| > |\eta^{(i)}_2|$) if $|d|<1$ (resp. $|d|>1$) to ensure that $a_i(\beta) > 0$ for $i=1,2$.


The methods used in cases (iv) and (v) can be adapted to the last case (vi):
\begin{center}
\begin{tabular}{ |c|c|c| }
\hline
&$A_1$ & $A_2$ \\ \hline \hline

(vi)&{$\begin{array}{c}
\\
\left(\begin{array}{cc}
d &  0 \\
0 & 1 \\
\end{array}\right)\\
\\
\end{array} d\in \mathbb C, |d|\neq 1$} 
 & $\begin{array}{c} 
 \\
 \left(\begin{array}{cc}
1 &  |c| \\
c & 0 \\
\end{array}\right)c\in \mathbb C
\\
\\
\end{array}$ \\  \hline
\end{tabular}
\end{center}

{\bf Step 2:} Show that, in each case, there is a choice of $(u,v)$ in the region characterized in Lemma \ref{18} for which the infimum in (\ref{6}) is, in fact, zero.

We choose $\hat{\beta}$ to ensure that $a_i(\hat{\beta})$ or $b_i(\hat{\beta})$ is positive as described in Step 1. 

Note that $g_{u,v}(\hat{\beta})$ vanishes at the two points $(u,v)=(\frac{1}{\|A_1^*(\hat{\beta})\|},0)$, $(u,v)=(0,\frac{1}{\|A_2^*(\hat{\beta})\|})$ and also along a curve joining these two points.

We now consider two cases:

{\bf Case (i):} $a_i(\hat{\beta})>0$

Choose $(u_0,v_0)$ such that $0< v_0 <\frac{1}{\|A_2^*\|}$, $f_i(u_0,v_0,\hat{\beta})>0$ and $g_{u_0,v_0}(\hat{\beta})=0$.  This is possible using Lemma \ref{18} and the above note about the vanishing of $g_{u,v}(\hat{\beta})$.

Let 
$$x_0 = \inf\{u:\inf_\beta g_{u,\lambda_0 u}(\beta)\leq 0\}~~{\rm where}~~\lambda_0=\frac{v_0}{u_0}.$$
Note that $x_0^2\geq \frac{1}{\|A_1^*\|^2 + \lambda_0^2 \|A_2^*\|^2}$. Also, from Lemma \ref{18}, it is clear that $f_i(x_0,\lambda_0 x_0,\hat{\beta}) >0$.

We now show that $\inf_\beta g_{x_0,\lambda_0 x_0}(\beta)=0$.  

To prove this we first show that $g_{(x_0,\,\lambda_0 x_0)}(\beta)\geq 0$ for all
$\beta$ (with $\|\beta\|_{2}=1$) as follows.
Assume there exists $\beta=\mu$ such that $g_{(x_0,\lambda_0
x_0)}(\mu)<0.$ Then there exists a neighborhood $U$ of
$x_0$ such that $g_{u,\lambda_0 u}(\mu)<0$ for all $u
\in U.$ For any $u\in U,$ $\inf_{\beta} g_{u,\lambda_0
u}(\beta)< 0,$ since $g_{u,\lambda_0 u}(\mu)<0$  for all $u\in U.$  Since $U$ is a neighborhood of
$x_0$ there exists a $u\in U$ such that $u<x_0.$ By the
previous assertion, $\inf_\beta g_{u,\lambda_0 u}(\beta)\leq 0$ for this smaller value of $u$,  which is a contradiction. 

Since $\inf_\beta g_{x_0,\lambda_0 x_0}(\beta)\leq 0$ by the definition of $x_0$ it follows that $\inf_{\beta} g_{x_0,\lambda_0 x_0}(\beta)=0.$

{\bf Case (ii):} $b_i(\hat{\beta})>0$

The arguments in this case are similar to Case (i).  This time choose $(u_0,v_0)$ such that $0< u_0 <\frac{1}{\|A_1^*\|}$, $f_i(u_0,v_0,\hat{\beta})>0$ and $g_{u_0,v_0}(\hat{\beta})=0$.  

Let 
$$y_0 = \inf\{v:\inf_\beta g_{\lambda_0 v,v}(\beta)\leq 0\}~~{\rm where}~~\lambda_0=\frac{u_0}{v_0}.$$
As in Case (i) we can see that $y_0^2\geq \frac{1}{\lambda_0^2\|A_1^*\|^2 + \\|A_2^*\|^2}$ and (from Lemma \ref{18}) that $f_i(\lambda_0 y_0,y_0,\hat{\beta}) >0$.

Using a procedure similar to that used in Case (i) it follows that $\inf_\beta g_{\lambda_0 y_0,y_0}(\beta)=0$.  

We have therefore shown that for all the cases in Table 3 which were not covered by the operator space approach it is possible to choose $(u,v)$ such that the infimum in (\ref{6}) is zero and this infimum is attained at a vector $\beta$ not equal to $\eta^{(1)}$ or $\eta^{(2)}$, so that the last term in parenthesis in (\ref{6}) is positive at $\beta$.  

It follows that, in each of these cases, there exists a contractive homomorphism which is not completely contractive.



\section{An Interesting Operator Space Computation}

In Section \ref{opspace} the existence of contractive homomorphisms which are not completely contractive was shown in many cases by studying different isometric embeddings of the space $(\mathbb C^2, \|\cdot\|_\A)$ into $(\mathcal M_2, \|\cdot \|_{\rm op})$ which led to distinct operator space structures.  The two embeddings considered there were $(z_1,z_2)\mapsto z_1 A_1 + z_2 A_2$ and $(z_1,z_2)\mapsto z_1 A_1^{\rm t} + z_2 A_2^{\rm t}$.  In this section we show that we can, for some choices of $(A_1,A_2)$, construct large collections of isometric embeddings of the space $(\mathbb C^2, \|\cdot\|_\A)$ into various matrix spaces.  Although the embeddings are into very distinct matrix spaces, we show that the operator space structures thus obtained are equivalent.

A result which is very useful in this context is the following proposition due to Douglas, Muhly and Pearcy (cf. \cite [Prop. 2.2]{DMP}).

\begin{prop}\label{DMP}
For $i=1,2,$ let $T_i$ be a contraction on a Hilbert space $\mathcal{H}_i$ and let $X$ be an operator mapping $\mathcal{H}_2$ into $\mathcal{H}_1$.  A necessary and sufficient condition that the operator on $\mathcal{H}_1 \oplus \mathcal{H}_2$ defined by the matrix $\left(\begin{smallmatrix}T_1 & X\\
0 & T_2\end{smallmatrix}\right)$ be a contraction is that there exist a contraction $C$ mapping $\mathcal{H}_2$ into $\mathcal{H}_1$ such that 
$$X=\sqrt{1_{\mathcal{H}_1} - T_1 T_1^*}~C~\sqrt{1_{\mathcal{H}_2} - T_2^* T_2}.$$
\end{prop}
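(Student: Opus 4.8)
The plan is to take the matrix-contraction condition apart into column operators and apply Douglas' range-inclusion lemma twice. Recall that lemma in the form I will use it: for bounded operators $P,Q$ between Hilbert spaces, $PP^{*}\le QQ^{*}$ if and only if $P=QC$ for some contraction $C$; this is the only external fact needed. Throughout I would write $D_{T_1^{*}}=\sqrt{1_{\mathcal H_1}-T_1T_1^{*}}$ and $D_{T_2}=\sqrt{1_{\mathcal H_2}-T_2^{*}T_2}$, which make sense because $T_1,T_2$ are contractions.

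\textbf{Step 1 (peel off $D_{T_1^{*}}$).} I would regard $M=\left(\begin{smallmatrix}T_1&X\\0&T_2\end{smallmatrix}\right)$ as the row operator $\left[\,R_1\ R_2\,\right]$ with $R_1=\left(\begin{smallmatrix}T_1\\0\end{smallmatrix}\right)$ and $R_2=\left(\begin{smallmatrix}X\\T_2\end{smallmatrix}\right)$, both mapping into $\mathcal H_1\oplus\mathcal H_2$. Since $MM^{*}=R_1R_1^{*}+R_2R_2^{*}$ and $R_1R_1^{*}=\left(\begin{smallmatrix}T_1T_1^{*}&0\\0&0\end{smallmatrix}\right)$, the operator $M$ is a contraction iff $R_2R_2^{*}\le\left(\begin{smallmatrix}1-T_1T_1^{*}&0\\0&1\end{smallmatrix}\right)=G\,G^{*}$, where $G=\left(\begin{smallmatrix}D_{T_1^{*}}&0\\0&1\end{smallmatrix}\right)$. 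By Douglas' lemma this is equivalent to $R_2=G\left(\begin{smallmatrix}Y\\Z\end{smallmatrix}\right)$ for some contraction $\left(\begin{smallmatrix}Y\\Z\end{smallmatrix}\right):\mathcal H_2\to\mathcal H_1\oplus\mathcal H_2$; comparing entries this says $Z=T_2$ and $X=D_{T_1^{*}}Y$, with $\left(\begin{smallmatrix}Y\\T_2\end{smallmatrix}\right)$ a contraction.

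\textbf{Step 2 (peel off $D_{T_2}$).} The column $\left(\begin{smallmatrix}Y\\T_2\end{smallmatrix}\right)$ is a contraction iff $Y^{*}Y+T_2^{*}T_2\le 1$, i.e. $Y^{*}Y=(Y^{*})(Y^{*})^{*}\le 1-T_2^{*}T_2=D_{T_2}D_{T_2}^{*}$. Applying Douglas' lemma once more (with $P=Y^{*}$, $Q=D_{T_2}$) gives $Y^{*}=D_{T_2}C'$ for a contraction $C'$, hence $Y=C\,D_{T_2}$ with $C:=(C')^{*}$ a contraction. Chaining the two steps, $M$ is a contraction iff $X=D_{T_1^{*}}Y=D_{T_1^{*}}\,C\,D_{T_2}=\sqrt{1_{\mathcal H_1}-T_1T_1^{*}}\;C\;\sqrt{1_{\mathcal H_2}-T_2^{*}T_2}$ for some contraction $C$, which is the assertion. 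For the converse direction I would also record the direct check: given such a $C$, put $Y:=C\,D_{T_2}$; then $Y^{*}Y\le D_{T_2}^{2}$, so $\left(\begin{smallmatrix}Y\\T_2\end{smallmatrix}\right)$ is a contraction, hence $R_2=G\left(\begin{smallmatrix}Y\\T_2\end{smallmatrix}\right)$ satisfies $R_2R_2^{*}\le G G^{*}$ and $MM^{*}=R_1R_1^{*}+R_2R_2^{*}\le\left(\begin{smallmatrix}T_1T_1^{*}+D_{T_1^{*}}^{2}&0\\0&1\end{smallmatrix}\right)=1$.

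\textbf{Expected obstacle.} There is no real analytic difficulty here once Douglas' lemma is in hand; what needs care is the bookkeeping of defect operators — keeping $D_{T_1^{*}}$ (not $D_{T_1}$) attached to $T_1$ and $D_{T_2}$ (not $D_{T_2^{*}}$) attached to $T_2$ — and always invoking Douglas in the form $PP^{*}\le QQ^{*}\Leftrightarrow P=QC$ rather than the $P^{*}P$ form, which is why I view $M$ as a \emph{row} of columns in Step 1 and transpose $Y$ in Step 2. Possible non-injectivity of the defect operators is harmless, since Douglas' lemma only produces the factor $C$ and no uniqueness is needed. (As an alternative one could avoid Douglas entirely by expanding $\|M(h_1\oplus h_2)\|^{2}\le\|h_1\|^{2}+\|h_2\|^{2}$, optimizing over $h_1$, and recognizing the resulting positivity condition, but this essentially re-derives Douglas' lemma on the spot.)
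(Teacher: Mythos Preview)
Your proof is correct, but note that the paper does not actually prove this proposition: it is stated with attribution to Douglas, Muhly and Pearcy (cited as \cite[Prop.~2.2]{DMP}) and used as a black box. So there is no ``paper's own proof'' to compare against.

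That said, your argument is clean and essentially the standard one: two applications of Douglas' range-inclusion lemma, first to split off the defect $D_{T_1^*}$ from the left and then $D_{T_2}$ from the right. The bookkeeping is right --- in particular you correctly track that the factorization forces $Z=T_2$ (since the lower-right block of $G$ is the identity), and you correctly pass to adjoints in Step~2 so that Douglas' lemma applies in the $PP^*\le QQ^*$ form. The explicit converse check at the end is a nice touch, though once the ``iff'' in Douglas' lemma is in hand each step is already an equivalence and the chain runs both ways automatically.
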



 The operator norm of the block matrix $\left(\begin{smallmatrix}\alpha I_m & B\\
0 & \alpha I_n\end{smallmatrix}\right),$ where $B$ is an $m \times
n$ matrix and $\alpha \in \mathbb C$, is not hard to compute (cf.
\cite [Lemma 2.1]{G}). The result can be easily extended to a matrix of the form
$\left(\begin{smallmatrix}\alpha_1 I_m & B\\
0 & \alpha_2 I_n\end{smallmatrix}\right),$ for arbitrary $\alpha_1,\alpha_2 \in \mathbb C$. 
\begin{lem}\label{lem:con}
If $B$ is an $m\times n$ matrix and $\alpha_1, \alpha_2 \in \mathbb C$ then $$\Big\|\left(\begin{matrix}\alpha_1 I_m & B\\
0 & \alpha_2 I_n\end{matrix}\right)\Big\| =\Big\|\left(\begin{matrix}\alpha_1  & \|B\|\\
0 & \alpha_2\end{matrix}\right)\Big\|.$$
\end{lem}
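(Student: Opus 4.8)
The plan is to reduce the claim to a direct sum of $2\times 2$ scalar blocks by means of a singular value decomposition of $B$. Write $B=U\Sigma V^{*}$, with $U$ an $m\times m$ unitary, $V$ an $n\times n$ unitary, and $\Sigma$ the $m\times n$ matrix whose main-diagonal entries are the singular values $s_{1}\ge s_{2}\ge\cdots\ge 0$ of $B$ and whose other entries vanish, so in particular $s_{1}=\|B\|$. Since the operator norm is invariant under unitary conjugation, and conjugating $\left(\begin{smallmatrix}\alpha_{1}I_{m} & B\\ 0 & \alpha_{2}I_{n}\end{smallmatrix}\right)$ by $U^{*}\oplus V^{*}$ fixes the two scalar diagonal blocks and turns $B$ into $U^{*}BV=\Sigma$, it suffices to evaluate $\big\|\left(\begin{smallmatrix}\alpha_{1}I_{m} & \Sigma\\ 0 & \alpha_{2}I_{n}\end{smallmatrix}\right)\big\|$. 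A further permutation of the $m+n$ coordinates (still a unitary conjugation) pairs, for $1\le i\le\min(m,n)$, the row carrying $\alpha_{1}$ in slot $i$ with the row carrying $\alpha_{2}$ in slot $m+i$, and leaves the remaining $|m-n|$ diagonal entries alone; this exhibits $\left(\begin{smallmatrix}\alpha_{1}I_{m} & \Sigma\\ 0 & \alpha_{2}I_{n}\end{smallmatrix}\right)$ as unitarily equivalent to
$$\Big(\bigoplus_{i=1}^{\min(m,n)}\begin{pmatrix}\alpha_{1} & s_{i}\\ 0 & \alpha_{2}\end{pmatrix}\Big)\oplus\gamma I_{|m-n|},$$
where $\gamma=\alpha_{1}$ if $m>n$, $\gamma=\alpha_{2}$ if $m<n$, and there is no third summand when $m=n$. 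As the norm of a direct sum is the largest of the norms of its summands, and each $2\times 2$ block has norm at least $\max(|\alpha_{1}|,|\alpha_{2}|)\ge|\gamma|$, the left-hand side of the lemma equals $\max_{1\le i\le\min(m,n)}\big\|\left(\begin{smallmatrix}\alpha_{1} & s_{i}\\ 0 & \alpha_{2}\end{smallmatrix}\right)\big\|$.

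The only remaining point is a monotonicity statement: I must show that $s\mapsto \big\|\left(\begin{smallmatrix}\alpha_{1} & s\\ 0 & \alpha_{2}\end{smallmatrix}\right)\big\|$ is non-decreasing on $[0,\infty)$, for then this maximum is attained at $s_{1}=\|B\|$ and the lemma follows. This is elementary. The square of that norm is the larger eigenvalue of the self-adjoint matrix $\left(\begin{smallmatrix}\alpha_{1} & s\\ 0 & \alpha_{2}\end{smallmatrix}\right)\left(\begin{smallmatrix}\alpha_{1} & s\\ 0 & \alpha_{2}\end{smallmatrix}\right)^{*}$, namely
$$\tfrac12\Big(|\alpha_{1}|^{2}+|\alpha_{2}|^{2}+s^{2}+\sqrt{(|\alpha_{1}|^{2}+s^{2}-|\alpha_{2}|^{2})^{2}+4s^{2}|\alpha_{2}|^{2}}\Big),$$
and differentiating in the variable $t=s^{2}$ gives the derivative $\tfrac12\big(1+(|\alpha_{1}|^{2}+|\alpha_{2}|^{2}+t)/\sqrt{(|\alpha_{1}|^{2}+t-|\alpha_{2}|^{2})^{2}+4t|\alpha_{2}|^{2}}\big)>0$. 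I expect this monotonicity check, together with keeping track of the coordinate permutation in the reduction when $m\ne n$, to be the only places requiring care; both are routine.

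Alternatively, one can bypass the singular value computation entirely using Proposition \ref{DMP}. For $\rho>0$ one has $\big\|\left(\begin{smallmatrix}\alpha_{1}I_{m} & B\\ 0 & \alpha_{2}I_{n}\end{smallmatrix}\right)\big\|\le\rho$ if and only if $\rho^{-1}\left(\begin{smallmatrix}\alpha_{1}I_{m} & B\\ 0 & \alpha_{2}I_{n}\end{smallmatrix}\right)$ is a contraction; applying Proposition \ref{DMP} with $T_{i}=(\alpha_{i}/\rho)I$ shows this holds precisely when $|\alpha_{1}|,|\alpha_{2}|\le\rho$ and $\|B\|\,\rho\le\sqrt{(\rho^{2}-|\alpha_{1}|^{2})(\rho^{2}-|\alpha_{2}|^{2})}$. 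This constraint on $\rho$ involves $B$ only through $\|B\|$, and the identical constraint describes $\{\rho:\big\|\left(\begin{smallmatrix}\alpha_{1} & \|B\|\\ 0 & \alpha_{2}\end{smallmatrix}\right)\big\|\le\rho\}$; taking the infimum over such $\rho$ shows the two norms coincide. I would likely record this second argument as a remark, since it makes transparent why the formula takes exactly the form stated.
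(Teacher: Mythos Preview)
Your second argument via Proposition \ref{DMP} is essentially the paper's own proof: the paper shows that the two block matrices share the same unit ball in the parameter space $(\alpha_{1},\alpha_{2},B)$ by observing that the DMP contractivity criterion $\|B\|^{2}\le(1-|\alpha_{1}|^{2})(1-|\alpha_{2}|^{2})$ reads identically in both cases; your version with a running parameter $\rho$ is the same idea with the homogeneity made explicit.

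Your first argument, via the singular value decomposition, is a genuinely different and more elementary route. It bypasses Proposition \ref{DMP} altogether, reducing the block matrix by unitary conjugation to a direct sum of $2\times 2$ scalar blocks whose norms are computed explicitly, and then invokes a monotonicity check in the off-diagonal entry. This buys self-containment at the price of a small calculus computation. The paper's DMP route, by contrast, makes immediately transparent \emph{why} only $\|B\|$ can matter (the defect operators $\sqrt{I-T_{i}T_{i}^{*}}$ are scalar, so the DMP criterion collapses to a scalar inequality in $\|B\|$), and it is this structural observation that the paper exploits again in the subsequent operator-space computation where the off-diagonal block is $Z_{2}\otimes B$.
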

\begin{proof}
Consider the following two sets
$$S_1 = \big\{\big((\alpha_1,\alpha_2);B\big):\big\|\left(\begin{smallmatrix}\alpha_1 I_m & B\\
0 & \alpha_2 I_n \end{smallmatrix}\right)\big\| \leq 1 \big\}$$
and 
$$S_2 = \big\{\big((\alpha_1,\alpha_2);B\big):\big\|\left(\begin{smallmatrix}\alpha_1 & \|B\|\\
0 & \alpha_2 \end{smallmatrix}\right)\big\| \leq 1 \big\}.$$
To prove the lemma, it is sufficient to show that these unit balls are the same.  

From Proposition \ref{DMP} the condition for the contractivity of the elements of $S_1$ and $S_2$ is the same, that is,
$$\|B\|^2 \leq (1-|\alpha_1|^2)(1-|\alpha_2|^2)$$

\end{proof}

The important observation from the lemma above is that, for fixed $\alpha_1, \alpha_2$, the norm of the matrix $\left(\begin{smallmatrix}\alpha_1 I_m & B\\
0 & \alpha_2 I_n\end{smallmatrix}\right)$ depends only on $\|B\|$.  

Now consider the pair $\mathbf A = (A_1,A_2)$ with $A_1 = \left( \begin{smallmatrix} \alpha_1 & 0\\
0 & \alpha_2 \end{smallmatrix}\right), A_2 = \left( \begin{smallmatrix} 0 & \beta\\
0 & 0 \end{smallmatrix}\right)$. Given any $m\times n$ matrix $B$ with $\|B\|= |\beta|$ we have the following isometric embedding of $(\mathbb C^2, \|\cdot \|_\A)$ into $(\mathcal M_{m+n}, \|\cdot \|_{\rm op})$
$$(z_1,z_2)\mapsto \left( \begin{matrix}
z_1 \alpha_1 I_m & z_2 B\\
0 & z_1 \alpha_2 I_n
\end{matrix}\right).$$
For various choices of the dimensions $m,n$ and the matrix $B$, this represents a large collection of isometric embeddings.

For fixed $\alpha_1, \alpha_2$, we let $X_B$ represent the above embedding of $(\mathbb C^2, \|\cdot\|_\A)$ into $(\mathcal{M}_{m+n}, \|\cdot\|_{\rm op})$. We now show that the operator space structures determined by these embeddings depend only on $\|B\|$.  If $\mathcal{V}_\A$ is the space $(\mathbb C^2, \|\cdot\|_\A)$, then $(X_B \otimes I_k)$ gives the embedding of $\mathcal {M}_k(\mathcal{V}_\A)$ into $\mathcal {M}_k(\mathcal {M}_{m+n}(\mathbb {C}))$.  An element of $\mathcal {M}_k(\mathcal{V}_\A)$ is defined by a pair of $k\times k$ matrices $Z_1, Z_2$, and the corresponding embedding into $\mathcal {M}_k(\mathcal {M}_{m+n}(\mathbb {C}))$ has the form 
$$\left(\begin{matrix}\alpha_1Z_1 \otimes I_m & Z_2\otimes B\\
0 & \alpha_2Z_1 \otimes I_n \end{matrix}\right). $$

It now remains to show that the operator norm of this matrix depends only on $\|B\|$.  Using Proposition \ref{DMP} it can be shown that 
$$\Bigg\| \left(\begin{matrix}\alpha_1Z_1 \otimes I_m & Z_2\otimes B\\
0 & \alpha_2Z_1 \otimes I_n \end{matrix}\right)\Bigg\| \leq 1 ~{\rm if~and~only~if}~ \Bigg\|\left(\begin{matrix}\alpha_1Z_1 & Z_2 \|B\|\\
0 & \alpha_2Z_1 \end{matrix}\right)\Bigg\| \leq 1.$$
Hence it follows that these two norms are in fact equal. We have therefore proved the following theorem. 
\begin{thm} For all $m\times n$ matrices $B$ with the same (operator) norm, the operator space structures on $\mathbb C^{m+n},$ determined by the different embeddings $$(z_1,z_2) \mapsto  
z_1 \Big ( \begin{array}{cc} \alpha_1 I_m & 0\\ 0 & \alpha_2 I_n \end{array} \Big ) + z_2 \Big ( \begin{array}{cc} 0 &  B\\ 0 & 0  \end{array} \Big ), \,\,  \alpha_1, \alpha_2\in \mathbb C,$$ 
are completely isometric irrespective of the particular choice of $B.$ Moreover all of them are completely isometric to the MIN space.
\end{thm}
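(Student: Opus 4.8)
The first assertion is, in effect, the computation sketched just above the statement, so my plan is to record that argument carefully and then adjoin the identification with the minimal operator space. \textbf{Step 1 (the common structure).} Fix $k\ge 1$ and $Z_1,Z_2\in\mathcal M_k$. As noted above, the $k$-th amplification of the embedding $X_B$ sends $(Z_1,Z_2)$ to the block operator
\[
M_B\;=\;\begin{pmatrix}\alpha_1 Z_1\otimes I_m & Z_2\otimes B\\ 0 & \alpha_2 Z_1\otimes I_n\end{pmatrix}.
\]
I would apply Proposition \ref{DMP}: $\|M_B\|\le 1$ holds exactly when $\alpha_1 Z_1$ and $\alpha_2 Z_1$ are contractions and $Z_2\otimes B=(D_1^{1/2}\otimes I_m)\,C\,(D_2^{1/2}\otimes I_n)$ for some contraction $C$, where $D_1=I-|\alpha_1|^2Z_1Z_1^{*}$ and $D_2=I-|\alpha_2|^2Z_1^{*}Z_1$. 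Assume $B\neq 0$ and write $B=\|B\|\,\widehat B$ with $\|\widehat B\|=1$. If $\|B\|\,Z_2=D_1^{1/2}C'D_2^{1/2}$ for some contraction $C'\in\mathcal M_k$, then $C:=C'\otimes\widehat B$ furnishes the required factorisation; conversely, compressing a witnessing $C$ against the pair of norming unit vectors $e\in\mathbb C^m$, $f\in\mathbb C^n$ of $B$ (i.e.\ with $\langle Bf,e\rangle=\|B\|$) produces a contraction $C'\in\mathcal M_k$ with $D_1^{1/2}C'D_2^{1/2}=\|B\|\,Z_2$. A second use of Proposition \ref{DMP}, now with the scalar ``$B$'' $=\|B\|$, then identifies $\|M_B\|$ with $\Bigl\|\left(\begin{smallmatrix}\alpha_1 Z_1 & \|B\|\,Z_2\\ 0 & \alpha_2 Z_1\end{smallmatrix}\right)\Bigr\|$. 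Since the latter depends on $B$ only through $\|B\|=|\beta|$, for every $k$ the identity on the two generators is a complete isometry between any two of the spaces $X_B$ with $\|B\|=|\beta|$; in particular each is completely isometric to the embedding $X_{(|\beta|)}$ of $(\mathbb C^2,\|\cdot\|_\A)$ into $\mathcal M_2$.

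\textbf{Step 2 (identification with the minimal structure).} By Step 1 it is enough to show that the operator space structure induced by $X_{(|\beta|)}$ on $(\mathbb C^2,\|\cdot\|_\A)$ is the minimal one. The inequality $\|\cdot\|_{\mathrm{MIN}}\le\|\cdot\|_{X_{(|\beta|)}}$ is automatic, so the work is the reverse bound, namely
\[
\Bigl\|\left(\begin{smallmatrix}\alpha_1 Z_1 & |\beta|\,Z_2\\ 0 & \alpha_2 Z_1\end{smallmatrix}\right)\Bigr\|\;\le\;\sup\bigl\{\,\|\omega_1 Z_1+\omega_2 Z_2\|_{\mathrm{op}}\ :\ (\omega_1,\omega_2)\in\Omega_\A^{*}\,\bigr\}\qquad(k\ge 1).
\]
Here I would again use Proposition \ref{DMP} to turn the contractivity of the left-hand operator into a factorisation $|\beta|\,Z_2=D_1^{1/2}CD_2^{1/2}$ with $C$ a contraction ($D_1,D_2$ as in Step 1), and then match it with the concrete description of the dual ball: by Lemma \ref{10} every $(\omega_1,\omega_2)\in\Omega_\A^{*}$ equals $(\partial_1 g(0),\partial_2 g(0))$ for some $g\colon\Omega_\A\to\mathbb D$ with $g(0)=0$, and in the present normalisation $\|\cdot\|_\A^{*}$ is given by the closed formula of Lemma \ref{4}; the M\"obius/von~Neumann slicing used in the proof of Lemma \ref{9} should then allow one to pass from such a factorisation to a norming functional, yielding the bound. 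Note that the easy route ``a subspace of a commutative $C^{*}$-algebra carries the minimal structure'' is not directly available, since the off-diagonal generator $\left(\begin{smallmatrix}0 & B\\ 0 & 0\end{smallmatrix}\right)$ is nilpotent and the $C^{*}$-algebra generated by the two model operators is noncommutative.

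\textbf{Where the difficulty lies.} Step 1 is routine once Proposition \ref{DMP} is invoked twice. Essentially all of the difficulty is in the reverse inequality of Step 2: showing that the finite block-triangular model $X_{(|\beta|)}$ detects, at the matrix level, nothing beyond the scalar functionals in $\Omega_\A^{*}$. This is the point at which the precise geometry of $\Omega_\A$ --- equivalently the triple $(\alpha_1,\alpha_2,\beta)$ --- must be used, and I would expect an extremal computation of the type carried out earlier in the paper for the dual norm to be the technical heart of the proof.
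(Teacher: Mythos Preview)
Your Step 1 is correct and is exactly the paper's argument: two applications of Proposition \ref{DMP} show that $\|M_B\|$ coincides with the norm of the $2k\times 2k$ block $\left(\begin{smallmatrix}\alpha_1 Z_1 & \|B\|Z_2\\ 0 & \alpha_2 Z_1\end{smallmatrix}\right)$, hence depends on $B$ only through $\|B\|$. The paper in fact gives fewer details than you do (it simply asserts the displayed equivalence ``can be shown'' via Proposition \ref{DMP}), so your compression/expansion with the norming vectors $e,f$ of $B$ is a welcome elaboration rather than a departure.

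For Step 2 there is nothing in the paper to compare against: the text preceding the theorem establishes only the first assertion, and the ``Moreover \ldots MIN space'' clause is stated in the theorem without any accompanying argument. Your sketch is candid about being a plan rather than a proof, and it has two concrete soft spots you should be aware of. First, Lemma \ref{4} computes $\|\cdot\|_\A^{*}$ only in the particular case $\alpha_1=\alpha_2=1$, $\beta=1$; invoking it ``in the present normalisation'' for arbitrary $\alpha_1,\alpha_2,\beta$ is not justified as written. Second, the appeal to the ``M\"obius/von~Neumann slicing'' of Lemma \ref{9} is not obviously relevant: that lemma reduces contractivity of $\rho_\V$ to functions vanishing at the base point, which is a statement about scalar test functions, not about comparing a concrete matricial embedding with the MIN structure. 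So while your diagnosis that the reverse inequality is the real content is sound, the tools you point to do not yet assemble into an argument; a direct computation of the $k$-norm of $\left(\begin{smallmatrix}\alpha_1 Z_1 & |\beta|Z_2\\ 0 & \alpha_2 Z_1\end{smallmatrix}\right)$ in terms of the extreme points of $\Omega_\A^{*}$ (for this specific $\A$) would be the natural thing to attempt.
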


\subsection*{Acknowledgment} The authors gratefully acknowledge the help they have received from Sayan Bagchi, Michael Dritschel and Dmitry Yakubovich.

\end{document}